\documentclass[10pt,smallextended,referee,envcountsect]{article}
\usepackage[utf8]{inputenc}
\usepackage[T1]{fontenc}
\usepackage{amsmath}
\usepackage{tikz}
\usepackage[french,english]{babel}
\usepackage{amsfonts}
\usepackage{xfrac}
\usepackage{amssymb}
\usepackage{mathrsfs}
\usepackage{dsfont}
\usepackage[numbers]{natbib}
\usepackage{mathabx}
\usepackage{bbold}
\usepackage{amsthm}
\usepackage{indentfirst}

\newtheorem{prop}{Proposition}[section]
\newtheorem{lemma}{Lemma}[section]
\newtheorem{theorem}{Theorem}[section]

\usepackage{color}

\newcommand{\bs}{\boldsymbol}
\newcommand{\ldb}{\ldbrack}
\newcommand{\rdb}{\rdbrack}
\newcommand{\N}{\mathbb{N}}
\newcommand{\R}{\mathbb{R}}
\newtheoremstyle{Roman}% name of the style to be used
  {\topsep}% measure of space to leave above the theorem. E.g.: 3pt
  {\topsep}% measure of space to leave below the theorem. E.g.: 3pt
  {\normalfont}% name of font to use in the body of the theorem
  {0pt}% measure of space to indent
  {\bfseries}% name of head font
  {.}% punctuation between head and body
  { }% space after theorem head; " " = normal interword space
  {\thmname{#1}\thmnumber{ #2}\thmnote{ (#3)}}
\theoremstyle{Roman}
\newtheorem{mydef}{Definition}[section]
\newtheorem{rem}{Remark}[section]
\newtheorem{ex}{Example}[section]

\author{Xavier Bacon \footnote{Statistique, Analyse et Modélisation 
Multidisciplinaire (SAMM), Centre Pierre Mendès France 90, rue de Tolbiac
75634 Paris cedex 13, France.} \footnote{E-mail adress: xavier.bacon@etu.univ-paris1.fr}}
\title{\textbf{OPTIMAL TRANSPORTATION OF VECTOR-VALUED MEASURES}}
\begin{document}

\date{}
\maketitle

\begin{abstract}
Given two $n$-dimensional measures $\bs \mu$ and $\bs \nu$ on Polish spaces, we propose an optimal transportation's formulation, inspired by classical Kantorovitch's formulation in the scalar case. In particular, we established a strong duality result and as a consequence, optimality conditions are investigated. Wasserstein's metrics induced by our formulation are also investigated.
\end{abstract}

\begin{keywords}
Optimal Transport, Calculus of variations, Wasserstein distance.
\end{keywords}

\section{Introduction and notations}
\subsection{Introduction}

Starting from the article of G. Monge \cite{monge1781memoire}, many mathematical formulations of optimal transportation have been offered (\cite{kantorovitch1958translocation}, \citep{beckmann1952continuous} and \cite{benamou2000computational}). In Monge's formulation, given two Polish spaces $X$ and $Y$, if $\mu$ (resp. $\nu$) is a Borelian probability on $X$ (resp. $Y$) and if $ c : X \times Y \rightarrow \mathbb{R}$, then the Monge's formulation consists on a minimization of the total cost among all Borelian maps which push forward $\mu$ to $\nu$, more precisely for $T$ a Borelian function between $X$ and $Y$ and $m$ a positive measure on $X$, $T \# \mu$ stands for the push forward measure which is the measure on $Y$ defined for all measurable set $B$ by $T \# \mu (B) := \mu \left[ T^{-1} (B) \right]$. Let $M(\mu,\nu)$ be the set of such maps, Monge transportation problem is then
\begin{align}
\mathcal{M}(\mu,\nu) := \inf \left\{ \int_X c \left[ x,T(x) \right] \, \mathrm{d} \mu(x) : T \in M(\mu,\nu) \right\}. \label{Monge}
\end{align}

In the middle of the 20th century, L. Kantorovitch proposed a relaxation of (\ref{Monge}) in \cite{kantorovitch1958translocation} by allowing mass splitting. Thinking of $\mu$ and $\nu$ as piles of sands, grains located at $x$ can be sent at different places at the same time. Formally, Kantorovitch's problem consists on minimizing a new total cost among all transference plans $\gamma \in \Pi(\mu,\nu)$, where $\Pi(\mu,\nu)$ is the set of couplings between $\mu$ and $\nu$ $i.e.$ $\gamma \in \Pi(\mu,\nu)$ if for all $A$ Borelian subset of $X$, $\gamma(A \times Y) = \mu(A)$ and for all $B$ Borelian subset of $Y$, $\gamma(X \times B) = \nu(B)$. Kantorovitch's transportation problem is then
\begin{align}
\mathcal{K}(\mu,\nu) := \inf \left\{ \iint_{X \times X} c(x,y) \, \mathrm{d} \gamma (x,y) : \gamma \in \Pi(\mu,\nu) \right\} \label{Kanto}
\end{align}

\noindent and for reasons that are discussed below, (\ref{Kanto}) is more accurate to extend the classical theory to vector-valued measures.

When $c$ is the power of a distance, these two problems induce a metric on the set of probabilities, called here Wasserstein metric (see \citep{ambrosio2008gradient}, \cite{villani2003topics}, \cite{santambrogio2015optimal} or \cite{villani2008optimal}). In the recent years, extensions of optimal transportation to more general objects have been proposed, such as multimarginal transportation (\cite{carlier2008optimal},\cite{kitagawa2015multi}) or density functional theory (\citep{cotar2015infinite}). Notice also that optimal transportation of matricial and tensorial measures (see \cite{carlen2014analog}, \citep{chen2017matricial}) or vector-valued densities in \cite{zinsl2015transport} have already been investigated.

In the present paper, we propose an extension to vector-valued measures. This one is deeply based on Kantorovich's formulation of $scalar$ optimal transportation (section \ref{KP}). Given two probabilities $\mu$ and $\nu$ and two decompositions of them (say) $\mu = \mu_1 + \cdots + \mu_n$ and $\nu = \nu_1 + \cdots + \nu_n$, more than a transportation between $\mu$ and $\nu$, we are interested in a description of a transportation between these two decompositions. A naive strategy would be to study the $n$ subproblems of classical optimal transportation between $\mu_i$ and $\nu_i$ for $i \in \ldbrack 1,n \rdbrack$, assuming that for all $i,\mu_i$ and $\nu_i$ share the same mass. If this new transportation problem leads to a metric, then the toplogy induced is the product one, due to the independance of each $phasis$. This problem has been explored in \cite{benamou2000numerical},\cite{benamou2004numerical} and more recently in \cite{lavenant2017time}. To remove the independance of each phasis, we allow transformation similarly as explored in \cite{chen2018vector}. Introducing $n^2$ particular costs $c_{ij}$ and $n^2$ particular transference plans $\gamma_{ij}$ which describe the transport of a piece of $\mu_i$ into a piece of $\nu_j$, we consider that the cost transportation to move $\mathrm{d} \mu_i(x)$ to $\mathrm{d} \nu_j(y)$ is $c_{ij}(x,y) \mathrm{d} \gamma_{ij} (x,y)$. Compatibility constraints are described by the set $\bs \Pi(\bs \mu,\bs \nu)$ where we ask that the $n^2$ transport plans $\gamma_{ij}$ clear each $\mu_i$ and fill each $\nu_j$. The new minimization problem is given by
\begin{align*}
\inf \left\{ \sum \limits_{(i,j) \in \ldb 1,n \rdb^2} \iint_{X \times Y} c_{ij}(x,y) \, \mathrm{d} \gamma_{ij}(x,y), \boldsymbol{\gamma} \in \bs \Pi \left( \begin{bmatrix}
\mu_1 \\
\vdots \\
\mu_n
\end{bmatrix}, \begin{bmatrix}
\nu_1 \\
\vdots \\
\nu_n
\end{bmatrix} \right) \right\}.
\end{align*}

In section \ref{KP}, we give an existence result for this problem as well as various examples. Then, following the shipper's problem interpretation of optimal transportation from L. Caffarelli (presented in \cite{villani2003topics}), we introduce a dual formulation in section \ref{DP} and prove strong duality theorem. As a consequence of the duality, optimality conditions for primal-dual optimizers are derived. Finally, assuming that costs $(c_{ij})$ are all the same power of different distances, a metric on vector-valued measures is presented in section \ref{Topology}.

\subsection{Notations}

In this article, we differenciate vectorial objects from scalar ones by using bold type character like $\bs \Pi$ for the first one and non-bold type character like $\Pi$ for the latter one.
\begin{itemize}
\item Given $X$ a measurable space, $\mathcal{P}(X)$ stands for the set of probability measures on $X$ and for all $n \in \mathbb{N}^*$, $\mathcal{M}^n(X)$ (resp. $\mathcal{M}^{n \times n}(X)$) refers to the set of vectorial measure on $(X,\mathcal{X})$ valued in $\mathbb{R}^n$ (resp. in $\mathbb{R}^{n \times n}$) meaning that each coordinate is a signed measure. $\mathcal{M}_+^n(X)$ (resp. $\mathcal{M}_+^{n \times n}(X)$) stands for the subset where each coordinate is positive measure.  Recall that for $T$ a measurable function between $X$ and $Y$ and $m$ a positive measure on $X$, $T \# m$ stands for the push forward measure which is the measure on $Y$ defined for all measurable set $B$ by $T \# m (B) = m \left[ T^{-1} (B) \right]$.

\item Given $X_1 \times \cdots \times X_n$ a product space and $k \in \ldb 1,n \rdb$, $\pi_k$ denotes the canonical projection on $X_k$ $i.e$,
\[
\begin{array}{lllll}
\pi_k & : & X_1 \times \cdots \times X_n & \longrightarrow & X_k \\
& & (x_1, \cdots, x_n) & \mapsto & x_k 
\end{array}
\]
\noindent and for $l \in \ldb 1,n \rdb$ and $l > k$, $\pi_{k,l}$ denotes the canonical projection on $X_k \times X_l$ $i.e$,
\[
\begin{array}{lllll}
\pi_{k,l} & : & X_1 \times \cdots \times X_n & \longrightarrow & X_k \times X_l \\
& & (x_1, \cdots, x_n) & \mapsto & (x_k,x_l) 
\end{array}
\]

\item For $A$ a borelian subset of $\mathbb{R}$, $\mathcal{L}_A$ stands for the Lebesgue measure on $A$. If $m,M \in \mathcal{M}_+(X)$ satisfy for all $A \in \mathcal{X}, m(A) \leqslant M(A)$, $m$ is called a submeasure of $M$ and this property will be written $m \leqslant M$. Note that being a submeasure of $M$ implies the absolute continuity w.r.t. $M$.

\item Given $(X,\mathcal{T})$ a topological space and $(Y,d)$ a metric space, $C_b(X,Y)$ refers to the set of bounded continuous functions between $(X,\mathcal{T})$ and $(Y,d)$.

\item Given $(i,j) \in \ldb 1,n \rdb^2$, $\bs{E_{ij}}$ refers to the matrix $n \times n$ whose coordinates are all equal to $0$ except $(i,j)$ which is equal to $1$.

\item Given a set $X$ and $S$ a subset of $X$, $\iota_{S}$ denotes for the function equals to $0$ on $S$ and $+ \infty$ on its complementary.

\item The notation $\wedge$ will be used to denote the minimum of two reals, and $\vee$ for the maximum.
\end{itemize}

\section{Kantorovitch's problem} \label{KP}
\subsection{Presentation}
In the remainder of the paper, $n$ will denote an element of $\mathbb{N}^*$.

\begin{mydef}
Given $(X,\mathcal{X})$ a mesurable space, $\mathcal{P}^n(X)$ denotes the set of admissible distributions of $n$ species defined by
\begin{align*}
\mathcal{P}^n(X) = \left\{ \textbf{m} =\begin{bmatrix}
m_1 \\
\vdots \\
m_n
\end{bmatrix} \in \mathcal{M}^n_+(X) : \sum \limits_{i=1}^n m_i \in \mathcal{P}(X) \right\}.
\end{align*}
\end{mydef}

\noindent It is straightforward that $\mathcal{P}^n(X)$ is a non-empty convex subset of $\mathcal{M}^n(X)$.

Inspired by Kantorovitch's formulation of optimal transportation, an extension of the notion of transference plan between two scalar measures is now proposed. For a well understanding of the next definition, let us make a short digression and present our model. Given $(X, \mathcal{X})$ and $(Y, \mathcal{Y})$ two measurable spaces and $\bs \mu \in \mathcal{P}^n(X), \bs \nu \in \mathcal{P}^n(Y)$ two distributions of $n$ species, since the total amount of each specy is not equal transformations between species are allowed. Given $(i,j) \in \ldbrack 1,n \rdbrack^2$, the ''transportation'' (with ''transformation'' if $i \neq j$) of a piece of $\mu_i$ into a piece of $\nu_j$ is described by a transference plan $\gamma_{ij} \in \mathcal{M}_+(X \times Y)$. Constraints on $\bs \gamma = (\gamma_{ij})_{1 \leqslant i,j \leqslant n}$ are given by
\begin{align*}
(\mbox{Clear } \bs \mu) \,\, \forall i \in \ldbrack 1,n \rdbrack , \forall A \in \mathcal{X}, \mu_i(A) &= \sum \limits_{k=1}^{n} \gamma_{ik}(A \times Y) \\
(\mbox{Fill } \bs \nu) \,\, \forall j \in \ldbrack 1,n \rdbrack, \forall B \in \mathcal{Y}, \nu_j(B) &= \sum \limits_{k=1}^{n} \gamma_{kj}(X \times B),
\end{align*}

\noindent or in other words, for all $(i,j) \in \ldb 1,n \rdb^2, \sum_{k=1}^{n} \gamma_{ik}$ has $\mu_i$ as first marginal and $\sum_{k=1}^{n} \gamma_{kj}$ has $\nu_j$ as second marginal. This naturally leads to the following definition.

\begin{mydef} \label{def transf plan}
Given $\boldsymbol{\mu} \in \mathcal{P}^n(X)$ and $\boldsymbol{\nu} \in \mathcal{P}^n(Y)$, $\bs \Pi(\boldsymbol{\mu},\boldsymbol{\nu})$ denotes the set of transference plans between $\boldsymbol{\mu}$ and $\boldsymbol{\nu}$ defined by
\begin{align*}
\bs \Pi(\boldsymbol{\mu},\boldsymbol{\nu}) = \left\{ \begin{array}{cc}
& \mu_i = \pi_1 \# \left( \sum \limits_{k=1}^{n} \gamma_{ik} \right) \\
{\bs \gamma} \in \mathcal{M}^{n \times n}_+(X \times Y) : \forall (i,j) \in \ldb 1,n \rdb^2, & \\
& \nu_j = \pi_2 \# \left( \sum \limits_{k=1}^{n} \gamma_{kj} \right)
\end{array}
\right\}.
\end{align*}
\end{mydef}

\begin{rem} According to the Definition \ref{def transf plan}, every $\bs \gamma \in \bs \Pi (\bs \mu, \bs \nu)$ induces a canonical transference plan (for $n=1$, the two definitions of transference plan are the same) between $\sum_{i=1}^{n} \mu_i$ and $\sum_{j=1}^{n} \nu_j$ given by $\sum_{i,j = 1}^{n} \gamma_{ij}$. However the converse is not true since given $\gamma \in \Pi \left( \sum_{i=1}^{n} \mu_i, \sum_{j=1}^{n} \nu_j \right)$ and $(x,y) \in X \times Y$, there is still a choice to make: is the first specy sent into the first or the second one or both? And in what proportions? Let us give a short example to clarify this remark. Taking
\begin{align*}
\bs \mu = \begin{bmatrix}
\mu_1 \\
\mu_2
\end{bmatrix} = \frac{1}{2} \begin{bmatrix}
 \mathcal{L}_{[-1,0]} \\
 \mathcal{L}_{[-1,0]}
\end{bmatrix}, \bs \nu = \begin{bmatrix}
\nu_1 \\
\nu_2
\end{bmatrix} = \frac{1}{2} \begin{bmatrix}
 \mathcal{L}_{[0,1]} \\
 \mathcal{L}_{[0,1]}
\end{bmatrix}
\end{align*}
and writting $\tau_1 : x \rightarrow x + 1$, it is known that $\gamma = \left( I,\tau_{1} \right) \# \left( \mu_1 + \mu_2 \right)$ is a transference plan between $\mathcal{L}_{[-1,0]} = \mu_1 + \mu_2$ and $\mathcal{L}_{[0,1]} = \nu_1 + \nu_2$. Given a such $\gamma$, $\mu_1$ can be sent towards $\nu_1$, or towards $\nu_2$. A mix is even possible and $\mu_1$ can be sent towards $\mathds{1}_{[0,\frac{1}{2}]} \, \mathrm{d} \nu_1 + \mathds{1}_{[\frac{1}{2},1]} \, \mathrm{d} \nu_2$. In other words, the following matrix measures are transference plans,
\begin{align*}
\begin{bmatrix}
(I,\tau_{1}) \# \mu_1 & 0 \\
0 & (I,\tau_{1}) \# \mu_2
\end{bmatrix} , \\
\begin{bmatrix}
0 & (I,\tau_{1}) \# \mu_1 \\
(I,\tau_{1}) \# \mu_2 & 0
\end{bmatrix}, \\
\frac{1}{2} \begin{bmatrix}
(I,\tau_{1}) \# \mu_1 & (I,\tau_{1}) \# \mu_1 \\
(I,\tau_{1})\# \mu_2 & (I,\tau_{1}) \# \mu_2
\end{bmatrix}.
\end{align*}
\end{rem}

We also introduce matrix-valued cost $\textbf{c}$ as a function from $X \times Y \rightarrow \mathcal{M}_n(\mathbb{R})$, integrable w.r.t. $\bs \gamma$ or positive measurable. The associated total cost is given by the following definition.

\begin{mydef}
Given $\boldsymbol{\gamma} \in \bs \Pi(\boldsymbol{\mu},\boldsymbol{\nu})$ and a cost matrix $\mathbf{c}$, $K( \boldsymbol{\gamma})$ denotes the total transportation cost according to $\bs \gamma$ defined by
\begin{align*}
K(\boldsymbol{\gamma}) = \sum \limits_{(i,j) \in \ldb 1,n \rdb^2} \iint_{X \times Y} c_{ij}(x,y) \, \mathrm{d} \gamma_{ij}(x,y).
\end{align*}
\end{mydef}

\noindent The Kantorovich's transportation problem between two distributions of $n$ species $\boldsymbol{\mu}$ and $\boldsymbol{\nu}$ for $\mathbf{c}$ is given by 
\begin{align*}
\tag{KP} \inf \left\{ K(\boldsymbol{\gamma}) : \boldsymbol{\gamma} \in  \bs \Pi(\boldsymbol{\mu},\boldsymbol{\nu}) \right\} =: \mathcal{K}(\boldsymbol{\mu},\boldsymbol{\nu}) \in [- \infty, + \infty]
\end{align*}

\begin{ex} \label{basic example}
Note that if $c_{ij} = c$ for all $(i,j) \in \ldb 1,n \rdb^2$ then (KP) shares the same value as the $scalar$ optimal transportation between $\sum_{i=1}^{n} \mu_i$ and $\sum_{j=1}^{n} \nu_j$ for the cost $c$. The most simple example of non trivial matrix cost is given by the following one: let $c$ be a $scalar$ cost and $\kappa$ be a real and define the following matrix cost:

\begin{align*}
\forall (x,y) \in X \times Y, \bs c(x,y) = \begin{bmatrix}
c(x,y) & c(x,y) + \kappa \\
c(x,y) + \kappa & c(x,y)
\end{bmatrix}.
\end{align*}

\noindent In other words, a constant cost is requiered for any transformation. See the example \ref{big example} below for a study of this special cost.
\end{ex}

\begin{ex} \label{Dirac example}
Let $ \bs p, \bs q \in \Delta_n := \left\{ \bs x \in \R_+^n, \sum_{i=1}^n x_i = 1 \right\}$ and $ \bs x, \bs y \in X^n$. Define
\begin{align*}
\bs \mu =\begin{bmatrix}
p_1 \delta_{x_1} \\
\vdots \\
p_n \delta_{x_n}
\end{bmatrix}, \bs \nu = \begin{bmatrix}
q_1 \delta_{y_1} \\
\vdots \\
q_n \delta_{y_n}
\end{bmatrix}.
\end{align*}

\noindent Let us first notice that for all $(i,j)$, since $\mathrm{supp}(\sum_{l=1}^{n}{\gamma_{il}}) \subseteq \mathrm{supp}(\mu_i) \times Y$ and $\mathrm{supp}(\sum_{l=1}^{n}{\gamma_{lj}}) \subseteq X \times \mathrm{supp}(\nu_j)$ then $\mathrm{supp}(\gamma_{ij}) \subseteq {(x_i,y_j)}$ and hence $\gamma_{ij} = t_{ij} \delta_{(x_i,y_j)}$ for some $t_{ij} \in [0,1]$. Constraints on $\bs \gamma$ give us that for all  $(i,j) \in \ldb 1,n \rdb^2, \sum_{\substack{l=1}}^{n}{t_{il}} = p_i$ and $\sum_{\substack{l=1}}^{n}{t_{lj}} = q_j$ and finally
\begin{align*}
\bs K(\bs \gamma) = \sum_{(i,j) \in \ldb 1,n \rdb^2} t_{ij} c_{ij}(x_i,y_j),
\end{align*}
\noindent(KP) becomes in that case
\begin{align*}
\inf \left\{ \sum_{(i,j) \in \ldb 1,n \rdb^2} t_{ij} c_{ij}(x_i,y_j), \bs t \in \mathcal{M}_n(\mathbb{R}) : \sum\limits_{\substack{l=1}}^{n}{t_{il}} = p_i, \sum\limits_{\substack{l=1}}^{n}{t_{lj}} = q_j   \right\}
\end{align*}
\noindent which reduces to the discret optimal transportation.
\end{ex}

\subsection{Existence of a minimizer}

Let $X$ and $Y$ be two Polish spaces. In this subsection, we prove an existence result for the problem (KP). Arguments used to establish it are the same as in scalar case (see \cite{villani2003topics} or \cite{santambrogio2015optimal} for instance). Let us first gather the main structural properties of problem (KP).
\begin{lemma} \label{lemmaexistenceprimal} Given $\boldsymbol{\mu} \in \mathcal{P}^n(X)$, $\boldsymbol{\nu} \in \mathcal{P}^n(Y)$ and $\boldsymbol{c}$ a cost matrix, following assertions are satisfied:

\textbf{[1]} $\bs \Pi(\boldsymbol{\mu},\boldsymbol{\nu})$ is a non-empty convex subset of $\mathcal{M}^{n \times n}(X \times Y)$.

\textbf{[2]} $\bs \Pi(\boldsymbol{\mu},\boldsymbol{\nu})$ is a weakly sequentially compact \footnote{w.r.t. the test function space $C_b(X \times Y, \mathbb{R}^{n \times n})$.} subset of $\mathcal{M}^{n \times n}(X \times Y)$.

\textbf{[3]} If for all $(i,j) \in \ldbrack 1,n \rdbrack^2$, $c_{ij}$ is bounded from below, then $K : \bs \Pi(\boldsymbol{\mu},\boldsymbol{\nu}) \rightarrow \mathbb{R}\cup \{ + \infty \}$ is bounded from below.

\textbf{[4]} If for all $(i,j) \in \ldbrack 1,n \rdbrack^2$, $c_{ij}$ is l.s.c. and bounded from below then $K : \bs \Pi(\boldsymbol{\mu},\boldsymbol{\nu}) \rightarrow \mathbb{R}\cup \{ + \infty \}$ is weakly l.s.c. with respect to the tight convergence.
\end{lemma}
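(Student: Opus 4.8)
The plan is to treat the four assertions in order, the genuine work being concentrated in [2] and [4], while [1] and [3] are essentially bookkeeping.

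For [1], non-emptiness I would obtain by exhibiting the product-type plan $\gamma_{ij} := \mu_i \otimes \nu_j$. Writing $\mu = \sum_{k} \mu_k$ and $\nu = \sum_k \nu_k$, which are probabilities by definition of $\mathcal{P}^n$, one computes $\sum_k \gamma_{ik} = \mu_i \otimes \nu$, whose first marginal is $\mu_i(\cdot)\,\nu(Y) = \mu_i$, and symmetrically $\sum_k \gamma_{kj} = \mu \otimes \nu_j$ has second marginal $\nu_j$; hence $\bs \gamma \in \bs\Pi(\bs\mu,\bs\nu)$. Convexity is immediate since the maps $\bs\gamma \mapsto \pi_1\#(\sum_k \gamma_{ik})$ and $\bs\gamma \mapsto \pi_2\#(\sum_k \gamma_{kj})$ are linear, so the affine constraints defining $\bs\Pi(\bs\mu,\bs\nu)$ are preserved under convex combinations, which also remain in $\mathcal{M}^{n\times n}_+(X\times Y)$.

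For [2], the main obstacle, I would argue by Prokhorov's theorem applied coordinate by coordinate. First each entry is mass-bounded: positivity of the $\gamma_{ik}$ gives the submeasure relation $\pi_1\#\gamma_{ij} \leqslant \pi_1\#(\sum_k\gamma_{ik}) = \mu_i$, whence $\gamma_{ij}(X\times Y) \leqslant \mu_i(X) \leqslant 1$. The same submeasure bounds furnish uniform tightness: for $\varepsilon>0$, tightness of the probabilities $\mu,\nu$ on the Polish spaces $X,Y$ yields compacts $K_X,K_Y$ with $\mu(X\setminus K_X)<\varepsilon$ and $\nu(Y\setminus K_Y)<\varepsilon$, and then $\gamma_{ij}\big((X\times Y)\setminus(K_X\times K_Y)\big) \leqslant \mu_i(X\setminus K_X)+\nu_j(Y\setminus K_Y) < 2\varepsilon$, uniformly in $\bs\gamma$. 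Prokhorov then makes each family $\{\gamma_{ij}\}$ relatively compact for the narrow topology, and a diagonal extraction over the finitely many indices $(i,j)$ produces, from any sequence $\bs\gamma^{(m)}$, a subsequence with $\gamma_{ij}^{(m)} \to \gamma_{ij}^\infty$ for every $(i,j)$. It remains to check $\bs\gamma^\infty \in \bs\Pi(\bs\mu,\bs\nu)$: for $\phi\in C_b(X)$ the function $\phi\circ\pi_1$ lies in $C_b(X\times Y)$, so passing to the limit in $\int \phi\circ\pi_1\, \mathrm{d}(\sum_k\gamma^{(m)}_{ik}) = \int\phi\,\mathrm{d}\mu_i$ gives $\pi_1\#(\sum_k\gamma^\infty_{ik}) = \mu_i$, and symmetrically for the second marginals; hence the limit satisfies the constraints and the set is weakly sequentially compact.

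Assertions [3] and [4] then follow from classical integral-functional arguments. For [3], if $c_{ij}\geqslant a_{ij}\in\R$ then $\int c_{ij}\,\mathrm{d}\gamma_{ij} \geqslant a_{ij}\,\gamma_{ij}(X\times Y) \geqslant -|a_{ij}|$ because the masses are at most $1$, so $K(\bs\gamma)\geqslant -\sum_{i,j}|a_{ij}| > -\infty$ uniformly. For [4], I would invoke the standard fact that on a metric space a l.s.c. function bounded from below is the pointwise increasing limit of a sequence $c^{(k)}_{ij}\in C_b(X\times Y)$; since $\bs\gamma\mapsto\int c^{(k)}_{ij}\,\mathrm{d}\gamma_{ij}$ is narrowly continuous and $\int c_{ij}\,\mathrm{d}\gamma_{ij}\geqslant\int c^{(k)}_{ij}\,\mathrm{d}\gamma_{ij}$, monotone convergence shows that each $\bs\gamma\mapsto\int c_{ij}\,\mathrm{d}\gamma_{ij}$ is narrowly l.s.c. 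Summing the finitely many lower-semicontinuous terms and using $\liminf(\,\cdot+\cdot\,)\geqslant\liminf+\liminf$ yields the weak lower semicontinuity of $K$ with respect to the tight convergence.
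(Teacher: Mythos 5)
Your proposal is correct and follows essentially the same route as the paper: the product plan $(\mu_i\otimes\nu_j)_{ij}$ for non-emptiness, coordinate-wise tightness via the marginal (sub)measure bounds plus Prokhorov and a $C_b$ test-function limit check for [2], and lower semicontinuity of each $\bs\gamma\mapsto\int c_{ij}\,\mathrm{d}\gamma_{ij}$ summed with superadditivity of $\liminf$ for [4]. The only difference is cosmetic: where the paper cites Lemma 1.6 of Santambrogio for the l.s.c. of the integral functionals, you inline its standard proof by monotone approximation with bounded continuous functions, and you make explicit the uniform mass bound $\gamma_{ij}(X\times Y)\leqslant 1$ that Prokhorov's theorem implicitly requires.
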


\begin{proof}

[1] Convexity is clear and it is easy to check that $\left( \mu_i \otimes \nu_j \right)_{(i,j) \in \ldb 1,n \rdb^2} \in \bs \Pi(\boldsymbol{\mu},\boldsymbol{\nu})$.

 \noindent [2] Let $(\boldsymbol{\gamma^k})_{k \in \mathbb{N}} \in \bs \Pi(\boldsymbol{\mu},\boldsymbol{\nu})^{\mathbb{N}}$ and $(i,j) \in \ldbrack 1,n \rdbrack^2$. We claim that $(\gamma_{ij}^k)_{k \in \mathbb{N}}$ is tight. Indeed, let $\varepsilon \in \mathbb{R}^*_+$ and $K_X$ (resp. $K_Y)$ a compact of $X$ (resp. $Y$) such that\footnote{These two compacts exist: all $\mu_i$ are finite measure on Polish spaces then it exists $K^i_X$ verifying these inequalities and then we just have to take union of them.}
\begin{align}
\forall i \in \ldbrack 1,n \rdbrack, \mu_i(X \setminus K_X) \leq \varepsilon \mbox{ (resp. } \forall j \in \ldbrack 1,n \rdbrack, \nu_j(Y \setminus K_Y) \leq \varepsilon\mbox{)}.
\end{align}

\noindent Let $k \in \mathbb{N}$, following inequalities are satisfied,
\begin{align*}
\gamma^k_{ij}\left[(X \times Y) \backslash (K_X \times K_Y)\right] &\leqslant \gamma_{ij}^k[(X \backslash K_X) \times Y)] + \gamma_{ij}^k[X \times (Y \backslash K_Y)] \\
&\leqslant \sum \limits_{l=1}^{n} \gamma_{il}^k[(X \backslash K_X) \times Y)] + \sum \limits_{\tilde{l}=1}^{n} \gamma_{\tilde{l}j}^k[X \times (Y \backslash K_Y)]  \\
&= \mu_i(X \backslash K_X) + \nu_j(Y \backslash K_Y) \mbox{ since $\bs \gamma \in \bs \Pi(\bs \mu,\bs \nu)$} \\
&\leqslant 2 \varepsilon.
\end{align*}
This proves the claim and thanks to Prokhorov theorem, there exists a non-negative finite measure on $X \times Y, \gamma_{ij}^{\infty}$ and a subsequence of $(\gamma^k_{ij})_{k \in \mathbb{N}}$ (still written $(\gamma^k_{ij})_{k \in \mathbb{N}}$) such as $(\gamma^k_{ij})_{k \in \mathbb{N}}$ tightly converges towards $\gamma_{ij}^{\infty}$. In order to conclude, we only have to check that $\boldsymbol{\gamma}^{\infty} \in \Gamma(\boldsymbol{\mu},\boldsymbol{\nu})$ Let $\phi \in C_b(X \times Y, \mathbb{R})$ and notice that for all $i \in \ldbrack 1, n \rdbrack$ and $k \in \mathbb{N}$,
\begin{align*}
\int_X \phi(x) \, \mathrm{d} \mu_i(x) = \sum \limits_{l=1}^{n} \iint_{X \times Y} \phi(x) \, \mathrm{d}\gamma^k_{il}(x,y) \rightarrow \sum \limits_{l=1}^{n} \iint_{X \times Y} \phi(x) \, \mathrm{d}\gamma_{il}^{\infty}(x,y)
\end{align*}.

\noindent [3] Straightforward.

\noindent [4] Let $(\boldsymbol{\gamma}^k)_{k \in \mathbb{N}} \in \Pi(\boldsymbol{\mu},\boldsymbol{\nu})^{\mathbb{N}}$ and $\boldsymbol{\gamma^{\infty}} \in \Pi(\boldsymbol{\mu},\boldsymbol{\nu})$ such that $(\boldsymbol{\gamma^k})_{k \in \mathbb{N}}$ tightly converges towards $\boldsymbol{\gamma^{\infty}}$ in that for all $(i,j) \in \ldbrack 1,n \rdbrack^2, (\gamma^k_{ij})_{k \in\mathbb{N}}$ weakly converges in duality with $C_b$ towards $\gamma^{\infty}_{ij}$. Then, by lower semi-continuity of $\gamma_{ij} \mapsto <\gamma_{ij},c_{ij}>$ (see \cite{santambrogio2015optimal}, Lemma 1.6), for all $(i,j) \in \ldbrack 1,\cdots,n \rdbrack^2$,
\begin{align*}
\iint_{X \times Y} c_{ij}(x,y) \, \mathrm{d} \gamma_{ij}^{\infty}(x,y) \leqslant \underset{k \rightarrow \infty}{\liminf} \, \iint_{X \times Y} c_{ij}(x,y) \, \mathrm{d} \gamma^k_{ij}(x,y)
\end{align*}
and since sum of $\liminf$ is less or equal to $\liminf$ of sum, it ends the proof.
\end{proof}

\noindent With these facts in hand, our main result easily follows.

\begin{theorem} \label{Existence th in K}
Given $\mathbf{c}$ a cost matrix such as for all $(i,j) \in \ldbrack 1,n \rdbrack^2, c_{ij}$ is bounded from below and $l.s.c.$, it exists $\boldsymbol{\gamma} \in \bs \Pi(\boldsymbol{\mu},\boldsymbol{\nu})$ such as $K(\boldsymbol{\gamma})= \mathcal{K}(\boldsymbol{\mu},\boldsymbol{\nu})$.
\end{theorem}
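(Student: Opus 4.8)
The plan is to apply the direct method of the calculus of variations, using the four structural facts assembled in Lemma \ref{lemmaexistenceprimal}. The strategy is completely standard: a lower-semicontinuous functional on a (sequentially) compact set attains its infimum. The hypotheses of the theorem — each $c_{ij}$ bounded from below and l.s.c. — are exactly what is needed to invoke parts \textbf{[3]} and \textbf{[4]}.

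First I would observe that $\bs \Pi(\bs \mu, \bs \nu)$ is non-empty by \textbf{[1]}, so that $\mathcal{K}(\bs \mu, \bs \nu) = \inf \{ K(\bs \gamma) : \bs \gamma \in \bs \Pi(\bs \mu, \bs \nu) \}$ is an infimum over a non-empty set. Moreover, since each $c_{ij}$ is bounded from below, part \textbf{[3]} guarantees $K$ is bounded from below on $\bs \Pi(\bs \mu, \bs \nu)$, so that $\mathcal{K}(\bs \mu, \bs \nu) > -\infty$ and the infimum is a well-defined real number (or $+\infty$, but the non-emptiness and the admissibility of, say, $(\mu_i \otimes \nu_j)_{i,j}$ keep it finite under mild integrability). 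I would then extract a minimizing sequence $(\bs \gamma^k)_{k \in \mathbb{N}} \in \bs \Pi(\bs \mu, \bs \nu)^{\mathbb{N}}$ such that $K(\bs \gamma^k) \to \mathcal{K}(\bs \mu, \bs \nu)$.

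Next I would use the weak sequential compactness from part \textbf{[2]}: up to passing to a subsequence (not relabelled), there exists $\bs \gamma^{\infty} \in \bs \Pi(\bs \mu, \bs \nu)$ such that $\bs \gamma^k$ tightly converges to $\bs \gamma^{\infty}$, i.e. $\gamma^k_{ij} \rightharpoonup \gamma^{\infty}_{ij}$ in duality with $C_b$ for every $(i,j)$. Crucially, compactness here delivers a limit that already lies in the admissible set, so no separate verification of the constraints on the limit is needed. Finally, applying the weak lower semicontinuity of $K$ from part \textbf{[4]} along this tightly convergent sequence yields
\begin{align*}
K(\bs \gamma^{\infty}) \leqslant \underset{k \to \infty}{\liminf} \, K(\bs \gamma^k) = \mathcal{K}(\bs \mu, \bs \nu).
\end{align*}
Since $\bs \gamma^{\infty} \in \bs \Pi(\bs \mu, \bs \nu)$, the reverse inequality $K(\bs \gamma^{\infty}) \geqslant \mathcal{K}(\bs \mu, \bs \nu)$ holds by definition of the infimum, whence $K(\bs \gamma^{\infty}) = \mathcal{K}(\bs \mu, \bs \nu)$ and $\bs \gamma^{\infty}$ is the sought minimizer.

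Because all the analytic work has been pushed into Lemma \ref{lemmaexistenceprimal}, there is essentially no obstacle remaining in the theorem itself; the proof is a clean assembly of the four facts. If any step warrants care, it is the passage to a common subsequence: part \textbf{[2]} is proved coordinate-by-coordinate (each $(\gamma^k_{ij})_k$ is tight and Prokhorov gives a convergent subsequence), so one must extract successively over the finitely many indices $(i,j) \in \ldb 1,n \rdb^2$ — or equivalently apply Prokhorov once in the product space $\mathcal{M}^{n \times n}_+(X \times Y)$ — to obtain a single subsequence along which every entry converges simultaneously. This is routine since $n^2$ is finite, but it is the only point where one should be slightly attentive.
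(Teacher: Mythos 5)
Your proposal is correct and follows essentially the same route as the paper's own proof: the classical direct method, extracting a minimizing sequence, invoking the weak sequential compactness and lower semicontinuity assembled in Lemma \ref{lemmaexistenceprimal}, and concluding by comparison with the infimum. Your closing remark about extracting a single subsequence along which all $n^2$ coordinates converge simultaneously is a point the paper's proof of part \textbf{[2]} leaves implicit, and it is resolved exactly as you describe, by finitely many successive extractions.
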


\begin{proof}
This proof follows the classical direct method of calculus of variations. Let $(\bs \gamma^k)_{k \in \N}$ be a minimizing sequence for the problem $(KP)$ $i.e$
\begin{align*}
\forall k \in \N, K(\bs \gamma^k) \leqslant \mathcal{K} (\bs \mu, \bs \nu) + \frac{1}{k}.
\end{align*}

\noindent Compactness of $\bs \Pi(\boldsymbol{\mu},\boldsymbol{\nu})$ according to Lemma $\ref{lemmaexistenceprimal}$ implies that $(\bs \gamma^k)_{k \in \N}$ can be assumed to converge towards (say) $\bs \gamma^{\infty}$. Lower semi-continuity implies that
\begin{align*}
K(\bs \gamma^{\infty}) \leqslant \underset{k \mapsto \infty}{\liminf} K(\bs \gamma^k) \leqslant \mathcal{K} (\bs \mu, \bs \nu),
\end{align*}
\noindent and then $\bs \gamma^{\infty}$ is a minimum.
\end{proof}

\section{Duality} \label{DP}
\subsection{Presentation}

\noindent In this section, we look for a dual formulation of (KP). In order to find it, consider the following situation\footnote{This interpretation is due to L. Caffareli in scalar case, according to \cite{villani2003topics}.}: mines full of different metals ($n$ kinds) and refineries ($n$ kinds) are distributed in space. For each kind of metal corresponds a kind of refinery, for instance a kind refinery for iron, a kind of refinery for gold etc. On the one hand we want to minimize the travel cost $i.e.$ minimize the associated Kantorovich's problem, on the other hand a character suggests to supervise the travelling operation for us and propose that contract: for each ton of metal $i$ located in $x$, its price will be $\varphi_i (x)$ to extract it and for each ton of metal $j$ located in $y$ its price will be $\psi_j (y)$ to drop it off. To guarantee our interrest, its contraints will be that for all $(i,j)$ and $(x,y)$, $\varphi_i (x) + \psi_j (y) \leqslant c_{ij}(x,y)$. All these considerations suggest to give following definitions.
\begin{mydef}
Given $\mathbf{c}$ a cost matrix, $\bs \Delta (\mathbf{c})$ denotes the set of potential couples for cost $\boldsymbol{c}$ defined by
\begin{align*}
\bs \Delta (\mathbf{c}) = \left\{ \begin{array}{cc}
 &  \varphi_i \in C_b(X) \\
\begin{bmatrix}
\boldsymbol{\varphi} \\
\boldsymbol{\psi}
\end{bmatrix}= \begin{bmatrix}
\varphi_1 & \cdots & \varphi_n \\
\psi_1 & \cdots & \psi_n
\end{bmatrix}, \forall (i,j) \in \ldb 1,n \rdb^2,&  \psi_j \in C_b(Y) \\
 & \varphi_i \oplus \psi_j \leqslant c_{ij}
\end{array} \right\}
\end{align*}
and if there is no ambiguity on $\textbf{c}$, we will write $\bs \Delta$ instead of $\bs \Delta (\mathbf{c})$.
\end{mydef}

\begin{mydef}
Given $\boldsymbol{\mu} \in \mathcal{P}^n(X), \boldsymbol{\nu} \in \mathcal{P}^n(Y)$, $\mathbf{c}$ a cost matrix and $\begin{bmatrix}
\boldsymbol{\varphi} \\
\boldsymbol{\psi}
\end{bmatrix}= \begin{bmatrix}
\varphi_1 & \cdots & \varphi_n \\
\psi_1 & \cdots & \psi_n
\end{bmatrix} \in \Delta (\mathbf{c})$, $D(\boldsymbol{\varphi},\boldsymbol{\psi})$ denotes the dual cost of $\begin{bmatrix}
\boldsymbol{\varphi} \\
\boldsymbol{\psi}
\end{bmatrix}$ defined by
\begin{align}
D(\boldsymbol{\varphi},\boldsymbol{\psi}) = \sum \limits_{i=1}^{n} \int_X \varphi_{i}(x) \, \mathrm{d}\mu_i(x) + \sum \limits_{j=1}^{n} \int_Y \psi_{j}(y) \, \mathrm{d}\nu_j(y).
\end{align}
\end{mydef}

\noindent Finally, the dual transportation problem is given $\boldsymbol{\mu} \in \mathcal{P}^n(X)$, $\boldsymbol{\nu} \in \mathcal{P}^n(Y)$ and a cost matrix $\mathbf{c}$,
\begin{align*}
\tag{DP} \sup \left\{ D(\boldsymbol{\varphi},\boldsymbol{\psi}) : \begin{bmatrix}
\boldsymbol{\varphi} \\
\boldsymbol{\psi}
\end{bmatrix} \in \Delta(\boldsymbol{c}) \right\} =: \mathcal{D}(\mu,\nu) \in [- \infty, + \infty]
\end{align*}

We establish now a weak duality result.
\begin{prop} \label{weak duality}
Given $\boldsymbol{\mu} \in \mathcal{P}^n(X)$, $\boldsymbol{\nu} \in \mathcal{P}^n(Y)$, a cost matrix $\mathbf{c}, \bs \gamma \in \bs \Pi (\bs \mu,\bs \nu)$ and $\begin{bmatrix}
\boldsymbol{\varphi} \\
\boldsymbol{\psi}
\end{bmatrix} \in \bs \Delta(\bs c)$, the following inequality is satisfied,
\begin{align*}
\bs D(\boldsymbol{\varphi},\boldsymbol{\psi}) \leqslant \bs K(\bs \gamma).
\end{align*}
\end{prop}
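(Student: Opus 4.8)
The plan is to unfold both functionals and reduce everything to a single integration against the family $\bs\gamma$, then to invoke the pointwise admissibility constraint. First I would exploit the two marginal conditions defining $\bs\Pi(\bs\mu,\bs\nu)$. Since each $\varphi_i \in C_b(X)$ and each $\mu_i$ is a finite measure, $\varphi_i$ is $\mu_i$-integrable, and the pushforward identity $\mu_i = \pi_1 \# \left( \sum_{k=1}^n \gamma_{ik} \right)$ yields
\begin{align*}
\int_X \varphi_i(x)\,\mathrm{d}\mu_i(x) = \sum_{k=1}^{n}\iint_{X\times Y}\varphi_i(x)\,\mathrm{d}\gamma_{ik}(x,y),
\end{align*}
and symmetrically, using $\psi_j\in C_b(Y)$ and $\nu_j = \pi_2 \# \left( \sum_{k=1}^n\gamma_{kj} \right)$,
\begin{align*}
\int_Y \psi_j(y)\,\mathrm{d}\nu_j(y) = \sum_{k=1}^{n}\iint_{X\times Y}\psi_j(y)\,\mathrm{d}\gamma_{kj}(x,y).
\end{align*}

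Next I would substitute these into the definition of $D(\bs\varphi,\bs\psi)$, relabel the summation index $k$ as $j$ in the first family of terms and as $i$ in the second, and merge the two resulting double sums over $\ldb 1,n\rdb^2$. This is pure bookkeeping and gives the key identity
\begin{align*}
D(\bs\varphi,\bs\psi) = \sum_{(i,j)\in\ldb 1,n\rdb^2}\iint_{X\times Y}\bigl(\varphi_i(x)+\psi_j(y)\bigr)\,\mathrm{d}\gamma_{ij}(x,y).
\end{align*}

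Finally, I would use admissibility: the constraint $\varphi_i\oplus\psi_j \leqslant c_{ij}$ holds pointwise on $X\times Y$, and since every $\gamma_{ij}$ is a nonnegative measure, integrating this inequality against $\gamma_{ij}$ bounds each summand by $\iint c_{ij}\,\mathrm{d}\gamma_{ij}$; summing over $(i,j)$ gives exactly $D(\bs\varphi,\bs\psi)\leqslant K(\bs\gamma)$. I do not expect a genuine obstacle here: the only points requiring care are measure-theoretic housekeeping, namely that the left-hand integrals are finite (guaranteed by boundedness of the potentials together with finiteness of the $\mu_i,\nu_j$), so that the statement is meaningful, and that when some $c_{ij}$ fails to be $\gamma_{ij}$-integrable the right-hand side equals $+\infty$ and the inequality is trivial. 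The pointwise-to-integral passage in the last step relies only on monotonicity of the integral for the positive measures $\gamma_{ij}$, so the argument carries over verbatim in $[-\infty,+\infty]$.
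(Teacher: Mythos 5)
Your proposal is correct and follows essentially the same route as the paper: both rewrite $D(\boldsymbol{\varphi},\boldsymbol{\psi})$ via the marginal constraints defining $\bs \Pi(\bs \mu,\bs \nu)$ as an integral of $\varphi_i(x)+\psi_j(y)$ against the $\gamma_{ij}$, then integrate the pointwise admissibility inequality $\varphi_i \oplus \psi_j \leqslant c_{ij}$ against the nonnegative measures $\gamma_{ij}$. Your added remarks on integrability (finiteness of the potential integrals, the trivial case where some $c_{ij}$ is not $\gamma_{ij}$-integrable) are sensible housekeeping that the paper leaves implicit, but they do not change the argument.
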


\begin{proof}
Let $\bs \gamma \in \bs \Pi(\bs \mu,\bs \nu)$ and $\begin{bmatrix}
\boldsymbol{\varphi} \\
\boldsymbol{\psi}
\end{bmatrix} \in \bs \Delta(\bs c)$. Compute:
\begin{align*}
\bs D(\bs \varphi,\bs \psi) &= \sum \limits_{i=1}^n \int_X \varphi_i \, \mathrm{d} \mu_i + \sum \limits_{j=1}^n \int_Y \psi_j \, \mathrm{d} \nu_j \\
				   &= \sum \limits_{i=1}^n \iint_{X \times Y} \varphi_i \, \mathrm{d} \left( \sum \limits_{j=1}^n \gamma_{ij} \right) +  \sum \limits_{j=1}^n \iint_{X \times Y} \psi_j \, \mathrm{d} \left( \sum \limits_{i=1}^n \gamma_{ij} \right).
\end{align*}
The last equality coming from the fact that $\bs \gamma \in \bs \Pi(\bs \mu,\bs \nu)$. And then,
\begin{align*}
\bs D(\bs \varphi,\bs \psi) &\leqslant \sum \limits_{(i,j) \in \ldb 1,n \rdb^2} \iint_{X \times Y} c_{ij} \, \mathrm{d} \gamma_{ij} \mbox{ since $\begin{bmatrix}
\boldsymbol{\varphi} \\
\boldsymbol{\psi}
\end{bmatrix} \in \Delta(\bs c)$} \\
				   &= \bs K(\bs \gamma).
\end{align*}
That concludes the proof.
\end{proof}

\subsection{An extension of \textbf{c}-transformation}

\noindent In order to prove that (DP) is attained, at least in compact case, we propose an extension of the classical $c$-transform (see the recall below). First, we make a short digression about modulus of continuity.

\begin{mydef}
Given $(X,d)$ a metric space and $f : X \rightarrow \mathbb{R}$, a uniform modulus of continuity for $f$ according to $d$ is a function $\omega : \mathbb{R}_+ \rightarrow \mathbb{R}_+$ such that the following conditions are satisfied:

[1] $\underset{t \rightarrow 0^+}{\lim} \omega(t) = 0$

[2] $\forall (x,x') \in X^2 : |f(x) - f(x')| \leqslant \omega \left[ d(x,x') \right]$.
\end{mydef}

\begin{lemma} \label{mclemma}
If $f$ admits a uniform modulus of continuity $\omega_f$ and $g$ admits a uniform modulus of continuity $\omega_g$ then $\omega_f + \omega_g$ is a uniform modulus of continuity for $\min (f,g)$.
\end{lemma}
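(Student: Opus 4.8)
The plan is to show directly that the function $\omega_f + \omega_g$ satisfies the two defining conditions of a uniform modulus of continuity for $\min(f,g)$. The first condition is immediate: since $\lim_{t \to 0^+} \omega_f(t) = 0$ and $\lim_{t \to 0^+} \omega_g(t) = 0$, the sum of limits gives $\lim_{t \to 0^+} (\omega_f + \omega_g)(t) = 0$. So the entire content of the lemma lies in verifying the inequality in condition [2].

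For condition [2], fix $(x,x') \in X^2$ and set $h = \min(f,g)$. I would estimate $|h(x) - h(x')|$ by exploiting the elementary identity $\min(a,b) = \frac{1}{2}(a+b) - \frac{1}{2}|a-b|$, or equivalently by a direct case analysis. The cleanest route is probably the following one-sided argument, run symmetrically. Without loss of generality suppose $h(x) \geqslant h(x')$, so that $|h(x) - h(x')| = h(x) - h(x')$. Say $h(x') = f(x')$ (the case $h(x') = g(x')$ is identical with the roles of $f,g$ swapped). Then, since $h(x) \leqslant f(x)$,
\begin{align*}
h(x) - h(x') \leqslant f(x) - f(x') \leqslant |f(x) - f(x')| \leqslant \omega_f[d(x,x')] \leqslant (\omega_f + \omega_g)[d(x,x')],
\end{align*}
where the last inequality uses that $\omega_g$ takes values in $\mathbb{R}_+$. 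This establishes $|h(x) - h(x')| \leqslant (\omega_f + \omega_g)[d(x,x')]$ in every case.

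I do not expect any genuine obstacle here; the result is a routine real-analysis fact and the sum $\omega_f + \omega_g$ is in fact a rather wasteful bound (one could take $\max(\omega_f,\omega_g)$ instead). The only point requiring the slightest care is the bookkeeping of the case split: one must handle both which of $h(x), h(x')$ is larger and which of $f,g$ realizes the minimum at the relevant point. Since $\omega_f$ and $\omega_g$ are both nonnegative, adding them can only enlarge the bound, so the symmetric argument closes uniformly without needing to track which function dominates. The nonnegativity of the moduli, built into the definition, is exactly what makes the sum a valid common bound regardless of the case.
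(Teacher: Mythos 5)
Your proof is correct, and it takes a genuinely different route from the paper's. The paper argues through the algebraic identity $\min(a,b) = \frac{1}{2}(a+b) - \frac{1}{2}|a-b|$: it bounds
\begin{align*}
|\min(f,g)(x) - \min(f,g)(x')| \leqslant \frac{|f(x)-f(x')|+|g(x)-g(x')|}{2} + \frac{\bigl| |f(x')-g(x')| - |f(x)-g(x)| \bigr|}{2}
\end{align*}
and controls the second term with the reverse triangle inequality, each half contributing $\frac{1}{2}\left(\omega_f + \omega_g\right)[d(x,x')]$, so the sum $\omega_f+\omega_g$ is what that decomposition naturally produces. Your one-sided case analysis --- reduce by symmetry to $h(x) \geqslant h(x')$, identify which of $f,g$ attains the minimum at $x'$, and use $h(x) \leqslant f(x)$ --- avoids both the identity and the reverse triangle inequality, and, as you correctly note, actually establishes the sharper conclusion that $\max(\omega_f,\omega_g)$ is already a modulus for $\min(f,g)$; the paper's symmetric splitting cannot deliver that refinement. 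The price is the case bookkeeping, which you handle completely: symmetry in $(x,x')$, symmetry in $(f,g)$, and nonnegativity of the moduli close every case under the common bound. For the paper's only application of this lemma (Lemma \ref{acmc}, where the moduli $\omega_{c_{1j}} + \cdots + \omega_{c_{nj}}$ are summed over the costs anyway before invoking Ascoli--Arzel\`a), the weaker sum bound suffices, so both proofs serve equally well there; your refinement would merely sharpen the common modulus to a max of the $\omega_{c_{ij}}$ without changing anything downstream.
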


\begin{proof}
Let $(x,x') \in X^2$, we have
\begin{align*}
|\min (&f,g)(x) - \min (f,g)(x')| \\
&\leqslant \frac{|f(x)-f(x')|+|g(x)-g(x')|}{2} + \frac{\left| |f(x')-g(x')| - |f(x)-g(x)| \right|}{2} \\
&\leqslant \frac{\omega_f[d(x,x')]+\omega_g[d(x,x')]}{2} + \frac{|f(x')-f(x)+g(x) - g(x')|}{2} \\
&\leqslant \omega_f[d(x,x')]+\omega_g[d(x,x')].
\end{align*}
This proves the lemma.
\end{proof}
Recall that when $f$ is a function between $X$ (resp. $Y$) and $\mathbb{R} \cup \left\{- \infty \right\} $ and $c$ a cost function, we can define its $c$-transform $f^c$ (resp. $\overline{c}$-transform) by:
\begin{align*}\begin{array}{lllll}
f^c & : & Y & \rightarrow & \mathbb{R}\cup \left\{- \infty, + \infty \right\} \\
& & y & \mapsto & \inf \left\{ c(x,y) - f(x) : x \in X \right\}
\end{array} \\
\left( \mbox{ resp.} \begin{array}{lllll}
f^c & : & X & \rightarrow & \mathbb{R}\cup \left\{- \infty, + \infty \right\} \\
& & x & \mapsto & \inf \left\{ c(x,y) - f(y) : y \in Y \right\}
\end{array} \right)
\end{align*}

 We introduce a new transformation and to motivate it just remark than in our case, we have $2n$ potentials and $n^2$ inequalities in the dual formulation. A naive idea would be to first subsitute $\varphi_1$ by $\psi_1^{c_{11}}$ but there is no guarantee that our new couple of potentials $\begin{bmatrix}
\psi_1^{c_{11}} & \varphi_2 & \cdots & \varphi_n \\
\psi_1 & \cdots & \cdots &\psi_n
\end{bmatrix}$ will still be in $\bs \Delta (\bs c)$. The following definition answers this problem.

\begin{mydef}
Given $\boldsymbol{f}=(f_1,\cdots,f_n) : X \rightarrow (\mathbb{R} \cup \{- \infty \})^n$ and $\boldsymbol{c}=(c_1, \cdots, c_n) : X \times Y \rightarrow (\mathbb{R} \cup \{+ \infty \})^n$, $\boldsymbol{f}^{\boldsymbol{c}}$ (resp. $\boldsymbol{f}^{\boldsymbol{\bar{c}}}$) denotes the $\boldsymbol{c}$-transform of $f$ (resp. $\boldsymbol{\bar{c}}$-transform of $f$) defined by
\begin{align*}
\forall y \in Y : \boldsymbol{f}^{\boldsymbol{c}}(y) = \min \left( f_1^{c_1}(y),\cdots, f_n^{c_n}(y) \right) \\
\left( \mbox{resp. } \forall x \in X : \boldsymbol{f}^{\boldsymbol{\bar{c}}}(x) = \min \left( f_1^{\bar{c_1}}(x),\cdots, f_n^{\bar{c_n}}(x) \right) \right)
\end{align*}
\end{mydef}

\noindent All benefits of this transformation is contained in the next proposition.
\begin{prop} \label{proposition threetwo}
Let $\boldsymbol{f}=(f_1,\cdots,f_n) : X \rightarrow (\mathbb{R} \cup \{- \infty \})^n$ and $\boldsymbol{c}=(c_1, \cdots, c_n) : X \times Y \rightarrow (\mathbb{R} \cup \{+ \infty \})^n$, then

\textbf{[1]} Following inequalities are satisfied,
\begin{align}
\forall i \in \ldbrack 1, n \rdbrack, f_i \oplus \boldsymbol{f}^{\boldsymbol{c}} \leqslant c_i \\
\forall j \in \ldbrack 1, n \rdbrack, \boldsymbol{f}^{\boldsymbol{\bar{c}}} \oplus f_j \leqslant c_j
\end{align}

\textbf{[2]} If $h : Y \rightarrow \mathbb{R} \cup \{- \infty \} $ is such that for all $ i \in \ldbrack 1, n \rdbrack, f_i \oplus h \leqslant c_i$ then $h \leqslant \boldsymbol{f}^{\boldsymbol{c}}$. If $h : X \rightarrow \mathbb{R} \cup \{- \infty \} $ is such that for all $j \in \ldbrack 1, n \rdbrack, h \oplus g_j \leqslant c_j$ then $h \leqslant \boldsymbol{f}^{\boldsymbol{\bar{c}}}$.
\end{prop}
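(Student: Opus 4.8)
The plan is to observe that both assertions follow directly from unwinding the definitions of the scalar $c$-transform and of the minimum, so that neither compactness nor continuity hypotheses are needed at this stage; everything reduces to the order-theoretic interplay between an infimum over $X$ and a minimum over the index set $\ldb 1,n \rdb$.

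For \textbf{[1]}, I would fix $i \in \ldb 1,n \rdb$ and an arbitrary point $(x,y) \in X \times Y$. Since $\boldsymbol{f}^{\boldsymbol{c}}(y)$ is a minimum over all indices, it is in particular dominated by its $i$-th entry, that is $\boldsymbol{f}^{\boldsymbol{c}}(y) \leqslant f_i^{c_i}(y)$. By the very definition of the scalar $c_i$-transform as an infimum over $X$, one has $f_i^{c_i}(y) \leqslant c_i(x,y) - f_i(x)$. Chaining these two inequalities and rearranging yields $f_i(x) + \boldsymbol{f}^{\boldsymbol{c}}(y) \leqslant c_i(x,y)$, which is exactly the first claimed inequality $f_i \oplus \boldsymbol{f}^{\boldsymbol{c}} \leqslant c_i$. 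The second inequality, involving $\boldsymbol{f}^{\boldsymbol{\bar{c}}}$, is obtained by the symmetric argument, exchanging the roles of $X$ and $Y$ (the relevant $f_j$ now being functions on $Y$).

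For \textbf{[2]}, I would establish the maximality property that genuinely characterises the transform. Suppose $h : Y \rightarrow \mathbb{R} \cup \{-\infty\}$ satisfies $f_i \oplus h \leqslant c_i$ for every $i$. Fixing $y \in Y$, this reads $h(y) \leqslant c_i(x,y) - f_i(x)$ for all $x \in X$ and all $i$; taking the infimum over $x$ gives $h(y) \leqslant f_i^{c_i}(y)$ for each $i$, and then taking the minimum over $i$ gives $h(y) \leqslant \boldsymbol{f}^{\boldsymbol{c}}(y)$. Together with \textbf{[1]}, this shows that $\boldsymbol{f}^{\boldsymbol{c}}$ is the largest admissible potential, which is the content we want. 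The statement for the $\boldsymbol{\bar{c}}$-transform follows once more by symmetry.

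I expect no serious obstacle here: the lone point requiring mild care is the arithmetic of extended reals, since each $f_i$ may take the value $-\infty$ and each $c_i$ the value $+\infty$. In the degenerate cases (when $c_i(x,y) = +\infty$, or when $f_i(x) = -\infty$ so that $c_i(x,y) - f_i(x) = +\infty$) the relevant inequalities hold trivially, and under the usual inf-convolution convention the indeterminate form $-\infty + \infty$ is read as $-\infty$, so the chaining of inequalities in both parts remains valid throughout.
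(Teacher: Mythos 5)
Your proof is correct and follows essentially the same route as the paper's: for \textbf{[1]} you chain $\boldsymbol{f}^{\boldsymbol{c}} \leqslant f_i^{c_i}$ with the defining inequality $f_i(x) + f_i^{c_i}(y) \leqslant c_i(x,y)$, and for \textbf{[2]} you take the infimum over $x$ and then the minimum over $i$, which is exactly the paper's argument. Your closing remark on extended-real conventions is a small addition the paper leaves implicit, but it changes nothing of substance.
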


\begin{proof}

[1] Let $(i,j) \in \ldbrack 1, n \rdbrack^2$ and $(x,y) \in  X \times Y$. Since $f_i(x) + f_j^{c_j}(y) \leqslant c_i(x,y)$ and $\boldsymbol{f}^{\boldsymbol{c}} \leqslant f_i^{c_i}$ the first inequality is deduced and note that the second inequality can be proved following the same way.

\noindent [2] If such a function exists, we deduce from $f_i \oplus h \leqslant c_i$ that for all $(x,y)  \in X \times Y, h(y) \leqslant c_i (x,y) - f_i (x)$, then take infimum with respect to $x$ and arbitrary on $i$ concludes for the first inequality. The same proof also works for the second inequality.
\end{proof}

We will show next that this process is a natural way to improve the dual cost while staying in the constraint $\Delta(\bs c)$, at least in compact case and continuous costs. Moreover, it provides a common uniform modulus of continuity for all the potentials.

\begin{lemma} \label{acmc}
Let $X,Y$ two compact metric spaces, \textbf{c} a continuous cost matrix and $(
\boldsymbol{\varphi},
\boldsymbol{\psi} ) \in \bs \Delta(\boldsymbol{c})$. It exists $(
\underline{\boldsymbol{\varphi}},
\underline{\boldsymbol{\psi}} ) \in \bs \Delta(\boldsymbol{c})$ such that

\textbf{[1]} $D(\boldsymbol{\varphi},\boldsymbol{\psi}) \leqslant D(\underline{\boldsymbol{\varphi}},\underline{\boldsymbol{\psi}})$.

\textbf{[2]} $\underline{\varphi_1},\cdots,\underline{\varphi_n},\underline{\psi_1},\cdots,\underline{\psi_{n-1}}$ and $\underline{\psi_n}$ admit a common uniform modulus of continuity which depends only on $\textbf{c}$.
\end{lemma}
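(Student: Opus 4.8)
The plan is to regularize the admissible couple $(\boldsymbol{\varphi},\boldsymbol{\psi})$ by a two-stage application of the vector $\boldsymbol{c}$-transform introduced above: first on the ``fill'' potentials, reading the columns of $\mathbf{c}$, then on the ``clear'' potentials, reading its rows. Concretely, for each $j \in \ldb 1,n \rdb$ I would set $\underline{\psi_j} := \min_{1 \leqslant i \leqslant n} \varphi_i^{c_{ij}}$, which is exactly the $\boldsymbol{c}$-transform of $\boldsymbol{\varphi}=(\varphi_1,\dots,\varphi_n)$ for the cost vector $(c_{1j},\dots,c_{nj})$; then, for each $i \in \ldb 1,n \rdb$, I would set $\underline{\varphi_i} := \min_{1 \leqslant j \leqslant n} \underline{\psi_j}^{\bar{c}_{ij}}$, the $\bar{\boldsymbol{c}}$-transform of $\underline{\boldsymbol{\psi}}=(\underline{\psi_1},\dots,\underline{\psi_n})$ for the cost vector $(c_{i1},\dots,c_{in})$. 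Since $X$ and $Y$ are compact and $\mathbf{c}$ is continuous, every cost $c_{ij}$ is bounded, so all these infima and minima are finite, and by the modulus estimate below they are continuous; hence $\underline{\varphi_i}\in C_b(X)$ and $\underline{\psi_j}\in C_b(Y)$.

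For admissibility and claim \textbf{[1]}, I would invoke Proposition~\ref{proposition threetwo} twice. Part \textbf{[1]} of that proposition, applied column by column, yields $\varphi_i \oplus \underline{\psi_j} \leqslant c_{ij}$ for every $(i,j)$, and then applied row by row yields $\underline{\varphi_i} \oplus \underline{\psi_j} \leqslant c_{ij}$ for every $(i,j)$, so that $(\underline{\boldsymbol{\varphi}},\underline{\boldsymbol{\psi}}) \in \bs \Delta(\mathbf{c})$. For the monotonicity of the dual cost, part \textbf{[2]} is the lever: the original $\psi_j$ satisfies $\varphi_i \oplus \psi_j \leqslant c_{ij}$ for all $i$, hence lies below the column transform, $\psi_j \leqslant \underline{\psi_j}$; and $\varphi_i$ satisfies $\varphi_i \oplus \underline{\psi_j} \leqslant c_{ij}$ for all $j$ (the inequality just obtained), hence lies below the row transform, $\varphi_i \leqslant \underline{\varphi_i}$. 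Integrating $\psi_j \leqslant \underline{\psi_j}$ against $\nu_j$ and $\varphi_i \leqslant \underline{\varphi_i}$ against $\mu_i$ and summing gives $D(\boldsymbol{\varphi},\boldsymbol{\psi}) \leqslant D(\underline{\boldsymbol{\varphi}},\underline{\boldsymbol{\psi}})$.

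The heart of the argument, and the step I expect to carry the real content, is claim \textbf{[2]}. The crucial point is that a $c$-transform is \emph{regularizing}: its modulus of continuity is inherited from the cost alone and is independent of the function being transformed. As $X \times Y$ is compact and each $c_{ij}$ continuous, $c_{ij}$ is uniformly continuous and admits a modulus $\omega_{ij}$ controlling both partial increments, $\sup_x |c_{ij}(x,y)-c_{ij}(x,y')| \leqslant \omega_{ij}[d_Y(y,y')]$ and $\sup_y |c_{ij}(x,y)-c_{ij}(x',y)| \leqslant \omega_{ij}[d_X(x,x')]$. The standard estimate $\inf_x A(x) - \inf_x B(x) \leqslant \sup_x (A-B)$ for an infimum of a uniformly equicontinuous family then shows that $\varphi_i^{c_{ij}}$ admits $\omega_{ij}$ as a modulus of continuity regardless of $\varphi_i$, and likewise $\underline{\psi_j}^{\bar{c}_{ij}}$ admits $\omega_{ij}$ regardless of $\underline{\psi_j}$; this is precisely what prevents any circular loss of regularity between the two stages. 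Finally, since $\underline{\psi_j}$ and $\underline{\varphi_i}$ are finite minima of such transforms, iterating Lemma~\ref{mclemma} shows that $\omega := \sum_{(i,j) \in \ldb 1,n \rdb^2} \omega_{ij}$ is a common uniform modulus of continuity for all of $\underline{\varphi_1},\dots,\underline{\varphi_n},\underline{\psi_1},\dots,\underline{\psi_n}$, depending only on $\mathbf{c}$. The only delicate points are the bookkeeping of columns versus rows and checking that the two partial modulus estimates hold uniformly in the frozen variable.
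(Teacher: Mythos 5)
Your proposal is correct and follows essentially the same route as the paper's proof: the same two-stage substitution $\underline{\psi_j} := \boldsymbol{\varphi}^{(c_{1j},\dots,c_{nj})}$ then $\underline{\varphi_i} := \underline{\boldsymbol{\psi}}^{\overline{(c_{i1},\dots,c_{in})}}$, with admissibility and dual-cost monotonicity from Proposition~\ref{proposition threetwo} and the common modulus $\sum_{(i,j)} \omega_{c_{ij}}$ obtained via Lemma~\ref{mclemma}. The only difference is cosmetic: you prove the regularizing property of the $c$-transform directly via the $\inf$-comparison estimate, where the paper cites it from the literature.
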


\begin{proof}
First, make the following substitutions:
\begin{align*}
\forall j \in \ldbrack 1,n \rdbrack : \psi_j \leftarrow \varphi^{(c_{1j},\cdots,c_{nj})} := \underline{\psi_j},
\end{align*}
then, thanks to Proposition \ref{proposition threetwo}, $(\boldsymbol{\varphi},\underline{\boldsymbol{\psi}}) \in \bs \Delta(\bs c)$ and $D(\boldsymbol{\varphi},\boldsymbol{\psi}) \leqslant D(\boldsymbol{\varphi},\underline{\boldsymbol{\psi}})$. Denoting $\omega_{c_{ij}}$ a uniform modulus of continuity of $c_{ij}$ for $(i,j) \in \ldbrack 1,n \rdbrack^2$, $\omega_{c_{ij}}$ is also a uniform modulus of continuity of $\varphi_i^{c_{ij}}$ according to \cite{santambrogio2015optimal} (Box. 1.8). Thanks to Lemma \ref{mclemma}, we conclude that $\omega_{\underline{\psi_j}} = \omega_{c_{1j}} + \cdots + \omega_{c_{nj}}$ is a uniform modulus of continuity of $\underline{\psi_j}$. Then, make the following substitutions:
\begin{align*}
\forall i \in \ldb 1,n \rdb : \varphi_i \leftarrow \underline{\psi}^{\overline{(c_{i1},\cdots,c_{in})}} := \underline{\varphi_i}
\end{align*}
and of course the new couple of potentials is still in $\bs \Delta(c)$ and the dual cost is increased. To conclude, we just have to check that $\sum_{1 \leqslant i,j \leqslant n} \omega_{c_{ij}}$ is a common uniform modulus of continuity for $(\underline{\boldsymbol{\varphi}},\underline{\boldsymbol{\psi}})$, which is clear.
\end{proof}

\begin{ex} \label{big example}
Coming back to the example \ref{basic example}, let us compute this new $\bs c$-transform to reduce the problem. Fix $\kappa$ to be strictly non-negative and assume that $X=Y$ and $\bs c$ is symetric (then, $\bs c$-transform is equivalent to $\bs{\overline{c}}$-transform). Constraints of (DP) are given by the following system:
$$
\left\{
\begin{split}
\varphi_1(x) + \psi_1(y) &\leqslant c(x,y) \\
\varphi_1(x) + \psi_2(y) &\leqslant c(x,y) + \kappa \\
\varphi_2(x) + \psi_1(y) &\leqslant c(x,y) + \kappa \\
\varphi_2(x) + \psi_2(y) &\leqslant c(x,y)
\end{split}
\right.
$$
\textbf{First step:} it is easy to check that:
\begin{align*}
(f_1,f_2)^{c,c + \kappa} = \left[ f_1 \wedge (f_2 - \kappa)  \right]^c \\
(f_1,f_2)^{c+\kappa,c} = \left[ (f_1 - \kappa) \wedge f_2 \right]^c,
\end{align*}
then make the following substitutions:
\begin{align*}
\psi_1 \leftarrow (\varphi_1,\varphi_2)^{c,c+\kappa} = \left[ \varphi_1 \wedge (\varphi_2 - \kappa)  \right]^c =: \tilde{\psi_1} \\
\psi_2 \leftarrow (\varphi_1,\varphi_2)^{c+\kappa,c} = \left[ (\varphi_1 - \kappa) \wedge \varphi_2  \right]^c =: \tilde{\psi_2}
\end{align*}
\noindent \textbf{Second step:} following the proof below, we make the following substitutions:
\begin{align*}
\varphi_1\leftarrow (\tilde{\psi_1},\tilde{\psi_2})^{c,c+\kappa} &= \left[ \tilde{\psi_1} \wedge (\tilde{\psi_2} - \kappa)  \right]^c \\
&= \tilde{\psi_1}^c \vee (\tilde{\psi_2} - \kappa)^c  \mbox{  since $(\underset{\alpha}{\sup} f_{\alpha} )^c = \underset{\alpha}{\inf} f_{\alpha}^c$} \\
&= \left[ \varphi_1 \wedge (\varphi_2 - \kappa)  \right]^{cc} \vee (\left[ (\varphi_1 - \kappa) \wedge \varphi_2  \right] - \kappa)^{cc} \\
&= \left[ \varphi_1 \wedge (\varphi_2 - \kappa)  \right]^{cc} \vee \left[ (\varphi_1 - 2 \kappa) \wedge \left( \varphi_2 - \kappa \right) \right]^{cc} \\
&= \left[ \varphi_1 \wedge (\varphi_2 - \kappa)  \right]^{cc} \mbox{ since if $f \leqslant g$ then $g^c \leqslant f^c$} \\
&= \tilde{\psi_1}^c \\
\varphi_2 \leftarrow (\tilde{\psi_1},\tilde{\psi_2})^{c +\kappa,c} &= \tilde{\psi_2}^c \mbox{ for the same reasons.}
\end{align*}

\noindent When $c = d$ is a distance, according to \cite{santambrogio2015optimal} (Proposition 3.1):
$$
\left\{
\begin{split}
\tilde{\psi_1}(y) - \tilde{\psi_1}(x) &\leqslant d(x,y) \\
\tilde{\psi_2}(y) - \tilde{\psi_1}(x) &\leqslant d(x,y) + \kappa \\
\tilde{\psi_1}(y) - \tilde{\psi_2}(x) &\leqslant d(x,y) + \kappa \\
\tilde{\psi_2}(y) - \tilde{\psi_2}(x) &\leqslant d(x,y),
\end{split}
\right.
$$

\noindent which is equivalent to the following system, thanks to the symmetry of $d$: 
$$
\left\{
\begin{split}
| \tilde{\psi_1}(x) - \tilde{\psi_1}(y) | &\leqslant d(x,y)\\
| \tilde{\psi_1}(x) - \tilde{\psi_2}(y) | &\leqslant d(x,y) + \kappa\\
| \tilde{\psi_2}(x) - \tilde{\psi_2}(y) | &\leqslant d(x,y)
\end{split}
\right.
$$

\noindent $i.e.$ $(\tilde{\psi_1},\tilde{\psi_2})$ are solution to the system below if and only if they are $1$-Lipschitz w.r.t. to $d$ and satisfy $\|\tilde{\psi_1} - \tilde{\psi_2} \|_{\infty} \leqslant \kappa$.
\end{ex}

\subsection{Existence of a maximizer}

\begin{theorem} \label{existence th in D}
Given $X$ et $Y$ two compact metric spaces, $\boldsymbol{\mu} \in \mathcal{P}^n(X)$, $\boldsymbol{\nu} \in \mathcal{P}^n(Y)$ and \textbf{c} a continuous cost matrix, there exists $(
\boldsymbol{\varphi},
\boldsymbol{\psi} ) \in \bs \Delta(\boldsymbol{c})$ such as $\mathcal{D}(\boldsymbol{\mu},\boldsymbol{\nu}) = D(\boldsymbol{\varphi},\boldsymbol{\psi})$.
\end{theorem}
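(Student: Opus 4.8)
The plan is to run the direct method on the dual side, as in the scalar theory, the two genuinely new ingredients being Lemma \ref{acmc} (which supplies a common modulus of continuity) and a single global normalization exploiting the mass constraint $\sum_i \mu_i, \sum_j \nu_j \in \mathcal{P}$. First I would record that the value is finite. Set $M := \max_{(i,j) \in \ldb 1,n \rdb^2} \|c_{ij}\|_{\infty}$, which is finite since each $c_{ij}$ is continuous on the compact $X \times Y$. The constant couple $\varphi_i \equiv -M$, $\psi_j \equiv 0$ satisfies $\varphi_i \oplus \psi_j = -M \leqslant c_{ij}$, hence lies in $\bs \Delta(\bs c)$ and gives $\mathcal{D}(\bs \mu, \bs \nu) \geqslant -M$; on the other hand, Proposition \ref{weak duality} applied to the admissible plan $(\mu_i \otimes \nu_j)_{ij}$ of Lemma \ref{lemmaexistenceprimal}[1] yields $\mathcal{D}(\bs \mu, \bs \nu) \leqslant \mathcal{K}(\bs \mu, \bs \nu) \leqslant M$. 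So the value is a real number and a maximizing sequence $(\bs \varphi^k, \bs \psi^k) \in \bs \Delta(\bs c)$ exists.

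Next I would regularize and normalize. Applying Lemma \ref{acmc} to each term, I may assume that all $2n$ potentials $\varphi_i^k, \psi_j^k$ share the common modulus of continuity $\omega := \sum_{(i,j)} \omega_{c_{ij}}$ depending only on $\bs c$, the dual cost having only increased so the sequence is still maximizing. The crucial structural remark is that the global shift $\varphi_i^k \leftarrow \varphi_i^k + \lambda$, $\psi_j^k \leftarrow \psi_j^k - \lambda$ (the same constant $\lambda$ on every coordinate) leaves both the constraints $\varphi_i \oplus \psi_j \leqslant c_{ij}$ and the dual cost unchanged, the latter because $\sum_i \mu_i(X) = \sum_j \nu_j(Y) = 1$. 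I would spend this one free parameter to impose the normalization $\max_j \sup_Y \psi_j^k = 0$.

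The main work — and the step where the vector structure could cause trouble — is the uniform bound: a single shift fixes only one degree of freedom, so one must verify that the $2n$ levels are automatically tied together. This is exactly what the two substitutions of Lemma \ref{acmc} provide, since the reduced potentials are (essentially mutual) $\bs c$-transforms: by construction $\varphi_i = \min_j \psi_j^{\overline{c_{ij}}}$, while feasibility forces $\psi_j \leqslant \min_i \varphi_i^{c_{ij}}$ and the construction gives the reverse inequality, so $\psi_j \geqslant \min_i \varphi_i^{c_{ij}}$ as well. With $\psi_j \leqslant 0$ and $|c_{ij}| \leqslant M$, feasibility $\varphi_i \oplus \psi_j \leqslant c_{ij} \leqslant M$ forces $\varphi_i \leqslant M$; the identity $\varphi_i(x) = \min_j \inf_y (c_{ij}(x,y) - \psi_j(y)) \geqslant -M - \max_j \sup_Y \psi_j = -M$ gives the matching lower bound $\varphi_i \geqslant -M$; and finally $\psi_j(y) \geqslant \min_i \inf_x (c_{ij}(x,y) - \varphi_i(x)) \geqslant -M - \max_i \sup_X \varphi_i \geqslant -2M$. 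Thus every reduced potential is confined to $[-2M, M]$, with a bound independent of $k$.

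Finally I would invoke Arzelà–Ascoli on the compact spaces $X$ and $Y$: equicontinuity (common modulus $\omega$) together with the uniform bound yields, after passing to a subsequence, uniform limits $\varphi_i^k \to \varphi_i^{\infty}$ and $\psi_j^k \to \psi_j^{\infty}$ for each $i,j$. Uniform convergence passes each constraint $\varphi_i^k \oplus \psi_j^k \leqslant c_{ij}$ to the limit, so $(\bs \varphi^{\infty}, \bs \psi^{\infty}) \in \bs \Delta(\bs c)$, and since the $\mu_i, \nu_j$ are finite measures it also gives $D(\bs \varphi^k, \bs \psi^k) \to D(\bs \varphi^{\infty}, \bs \psi^{\infty})$. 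As the sequence was maximizing, $D(\bs \varphi^{\infty}, \bs \psi^{\infty}) = \mathcal{D}(\bs \mu, \bs \nu)$, which proves the theorem.
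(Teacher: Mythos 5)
Your proof is correct and follows the same overall strategy as the paper's: direct method on (DP), Lemma \ref{acmc} for a common modulus of continuity, a normalization by a global constant shift (legitimate precisely because $\sum_i \mu_i(X) = \sum_j \nu_j(Y) = 1$), uniform bounds, then Ascoli--Arzel\`a and passage to the limit. Where you genuinely diverge is the equiboundedness step, and your version is the more careful one. The paper normalizes by subtracting $m_k := \min_i \inf_X \varphi_i^k$ from all the $\varphi_i^k$ and immediately asserts $\varphi_i^k \leqslant \omega\left[ \mathrm{diam}(X) \right]$ for \emph{every} $i$; strictly speaking this only follows for the index attaining the minimum, since the common modulus controls each oscillation $\sup \varphi_i^k - \inf \varphi_i^k$ but not the individual levels $\inf \varphi_i^k$, so the paper's bound for all $i$ tacitly uses the fact that the $2n$ levels are tied to one another. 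You make that tie explicit: after the two substitutions of Lemma \ref{acmc} the potentials are mutually $\bs c$-conjugate, namely $\varphi_i = \min_j \psi_j^{\overline{c_{ij}}}$ by construction and $\psi_j = \min_i \varphi_i^{c_{ij}}$ --- one inequality from feasibility, the other because Proposition \ref{proposition threetwo}[2] gives $\varphi_i \leqslant \underline{\varphi_i}$ and the $c$-transform reverses order; this last one-line justification (``the construction gives the reverse inequality'') is the only spot in your write-up worth expanding, since the second substitution changes $\bs \varphi$ and the identity for $\psi_j$ is not literally ``by construction''. These identities are invariant under your global shift, and together with the normalization $\max_j \sup_Y \psi_j^k = 0$ (the supremum being attained by compactness, which is what turns feasibility into $\varphi_i^k \leqslant M$) they confine every potential to $[-2M, M]$ uniformly in $k$. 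By contrast the paper, after its normalization, performs a further substitution $\psi_j \leftarrow \bs \varphi^{(c_{1j},\cdots,c_{nj})}$ to bound $\bs \psi$, whereas your conjugacy identities yield the bounds on $\bs \varphi$ and $\bs \psi$ at once. Net effect: your route costs one extra structural observation but produces a self-contained proof of the uniform bound, patching the small gap noted above; your preliminary finiteness check of $\mathcal{D}(\bs \mu, \bs \nu)$ via weak duality and the product plan, absent from the paper, is a harmless bonus.
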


\begin{proof}
The constraint set is non-empty since $\bs c$ is bounded by below (continuous on compact). Let:
\begin{align*}
\begin{bmatrix}
\bs{\varphi^k} \\
\boldsymbol{\psi^k}
\end{bmatrix}_{k \in \mathbb{N}}= \begin{bmatrix}
\varphi_1^k & \cdots & \varphi_n^k \\
\psi_1^k & \cdots & \psi_n^k
\end{bmatrix}_{k \in \mathbb{N}}
\end{align*}
be a maximizing sequence for (DP). According to Lemma \ref{acmc}, we may assume that our $2n$ sequences share a common uniform modulus of continuity. We now prove that the sequence is uniformly bounded with respect to $n$. Indeed, setting for all $k \in \mathbb{N}$:
\begin{align*}
m_k := \min \left[ \underset{x \in X}{\inf}\varphi_1^k(x),\cdots,\underset{x \in X}{\inf}\varphi_n^k(x) \right],
\end{align*}
and since $m_k$ is finite, we can substitute:
\begin{align*}
\forall i \in \ldbrack 1, \cdots,n \rdbrack :  \varphi_i^k \leftarrow \varphi_i^k - m_k \mbox{ still written $\varphi_i^k$} \\
\forall j \in \ldbrack 1, \cdots,n \rdbrack : \psi_j^k \leftarrow \psi_j^k + m_k \mbox{ still written $\psi_j^k$},
\end{align*}
and these new potentials are still admissible, have the same dual cost and for all $i \in \ldbrack 1,n \rdbrack, k \in \mathbb{N}, \varphi_i^k \geqslant 0$. Therefore we have:
\begin{align*}
\forall i \in \ldb 1,n \rdb, k \in \mathbb{N} : \varphi_i^k \leqslant \omega \left[ diam(X) \right],
\end{align*}
which concludes the case of ${\boldsymbol{\varphi}}$. Next, let us make new following substitutions:
\begin{align*}
\forall j \in \ldbrack 1,n \rdbrack : \psi_j \leftarrow \varphi^{\overline{(c_{1j},\cdots,c_{nj})}} \mbox{ still written $\psi_j$}.
\end{align*}
We have for all $y \in Y, j \in \ldbrack 1,n \rdbrack$ and $k \in \mathbb{N}$,
\begin{align*}
\min &\left( c_{1j},\cdots c_{nj} \right) - \omega \left[ diam(X) \right] \\
&\leqslant \min \left[ \underset{x \in X}{\inf} c_{1j}(x,y) - {\varphi_1^k}(x), \cdots, \underset{x \in X}{\inf} c_{nj}(x,y) - \varphi_n^k(x)  \right] := {\psi_j^k}(y) \\
\mbox{and   } {\psi_j^k}(y) &:= \min \left[ \underset{x \in X}{\inf} c_{1j}(x,y) - {\varphi_1^k}(x), \cdots, \underset{x \in X}{\inf} c_{nj}(x,y) - \varphi_n^k(x)  \right] \\
&\leqslant \max \left( c_{1j},\cdots, c_{nj} \right),
\end{align*}
which leads to the conclusion on $\bs \psi$. Finally, the Ascoli-Arzelà theorem applied to each sequence provides the existence of a continuous couple
\begin{align*}
\begin{bmatrix}
\boldsymbol{\varphi^{\infty}} \\
\boldsymbol{\psi^{\infty}}
\end{bmatrix} = \begin{bmatrix}
\varphi_1^{\infty} & \cdots & \varphi_n^{\infty} \\
\psi_1^{\infty} & \cdots & \psi_n^{\infty}\end{bmatrix}
\end{align*}
which belong to $\Delta(\boldsymbol{c})$ thanks to pointwise convergence and $D(\boldsymbol{\varphi^{\infty}},\boldsymbol{\psi^{\infty}}) = \mathcal{D}(\boldsymbol{\mu},\boldsymbol{\nu})$ thanks to uniform convergence on finite measure sets.
\end{proof}

\subsection{Strong duality} \label{SDT}

\noindent We establish a strong duality result. The proof follows the one of strong duality theorem for scalar optimal transportation proposed by C. Jimenez (see \cite{santambrogio2015optimal}).

\begin{mydef}
Given $\bs \mu \in \mathcal{P}^n(X)$, $\bs \nu \in \mathcal{P}^n(Y)$ and $\bs c$ a cost matrix,  we denote by $H$ the value function of the perturbated dual problem, $i.e.$
\begin{align*}
\forall \bs \varepsilon \in C(X \times Y,\mathbb{R}^{n \times n}), H(\bs\varepsilon)= \sup \left\{ D(\bs\varphi,\bs\psi) : \begin{bmatrix}
\boldsymbol{\varphi} \\
\boldsymbol{\psi}
\end{bmatrix} \in \Delta(\bs c -\bs\varepsilon) \right\}
\end{align*}
\end{mydef}

\begin{lemma} Let $X$ and $Y$ two metric compact spaces. $H$ satisfy the following properties:

\textbf{[1]} $H$ is concave.

\textbf{[2]} Suppose that \textbf{c} is continuous, then $H$ is u.s.c. with respect to the uniform norm.
\end{lemma}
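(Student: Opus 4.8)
The plan is to treat the two assertions separately, exploiting the affine structure of the dual functional $D$ together with the order structure of the constraint $\bs\Delta$. Throughout I note that since $\bs c - \bs\varepsilon$ is continuous on a compact, hence bounded below, the constraint set $\bs\Delta(\bs c - \bs\varepsilon)$ is always nonempty (constant potentials are admissible), so no $-\infty$ pathology occurs. For [1], I fix $\bs\varepsilon_0,\bs\varepsilon_1 \in C(X\times Y,\R^{n\times n})$ and $t \in [0,1]$, set $\bs\varepsilon_t = (1-t)\bs\varepsilon_0 + t\bs\varepsilon_1$, and show that admissibility passes to convex combinations: if $(\bs\varphi^0,\bs\psi^0) \in \bs\Delta(\bs c - \bs\varepsilon_0)$ and $(\bs\varphi^1,\bs\psi^1) \in \bs\Delta(\bs c - \bs\varepsilon_1)$, then for every $(i,j)$,
\[
\bigl[(1-t)\varphi_i^0 + t\varphi_i^1\bigr] \oplus \bigl[(1-t)\psi_j^0 + t\psi_j^1\bigr] \leqslant (1-t)(c_{ij}-\varepsilon_{ij}^0) + t(c_{ij}-\varepsilon_{ij}^1) = c_{ij} - \varepsilon_{ij}^t,
\]
so the convex combination lies in $\bs\Delta(\bs c - \bs\varepsilon_t)$. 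As $D$ is affine in $(\bs\varphi,\bs\psi)$, its value on the combination is $(1-t)D(\bs\varphi^0,\bs\psi^0) + tD(\bs\varphi^1,\bs\psi^1)$; taking the supremum over each pair independently yields $H(\bs\varepsilon_t) \geqslant (1-t)H(\bs\varepsilon_0) + tH(\bs\varepsilon_1)$, i.e. concavity.

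For [2] I deliberately avoid any Ascoli--Arzelà argument and instead use a single scalar shift of the potentials. Let $\bs\varepsilon_k \to \bs\varepsilon$ uniformly and set $\delta_k := \|\bs\varepsilon_k - \bs\varepsilon\|_\infty \to 0$. Choose near-maximizers $(\bs\varphi^k,\bs\psi^k) \in \bs\Delta(\bs c - \bs\varepsilon_k)$ with $D(\bs\varphi^k,\bs\psi^k) \geqslant H(\bs\varepsilon_k) - 1/k$. From $\varphi_i^k \oplus \psi_j^k \leqslant c_{ij} - \varepsilon_{ij}^k \leqslant c_{ij} - \varepsilon_{ij} + \delta_k$ one sees that subtracting the constant $\delta_k$ from every $\psi_j^k$ produces a pair lying in $\bs\Delta(\bs c - \bs\varepsilon)$. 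The decisive point is that this shift costs exactly $\delta_k$ in the dual functional: since $\bs\nu \in \mathcal{P}^n(Y)$ forces $\sum_{j=1}^n \nu_j(Y) = 1$, one gets
\[
D(\bs\varphi^k,\bs\psi^k - \delta_k) = D(\bs\varphi^k,\bs\psi^k) - \delta_k\sum_{j=1}^n \nu_j(Y) = D(\bs\varphi^k,\bs\psi^k) - \delta_k.
\]
Hence $H(\bs\varepsilon) \geqslant D(\bs\varphi^k,\bs\psi^k) - \delta_k \geqslant H(\bs\varepsilon_k) - 1/k - \delta_k$, and letting $k \to \infty$ gives $\limsup_k H(\bs\varepsilon_k) \leqslant H(\bs\varepsilon)$, which is upper semicontinuity for the uniform norm.

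The main obstacle is conceptual rather than computational. The instinctive route to [2] is to invoke the existence theorem (Theorem \ref{existence th in D}) to get genuine maximizers, equip them with a common uniform modulus of continuity via Lemma \ref{acmc}, and pass to a limit by Ascoli--Arzelà; but this breaks down, because uniform convergence $\bs\varepsilon_k \to \bs\varepsilon$ does \emph{not} make the perturbed costs $\{\bs c - \bs\varepsilon_k\}$ equicontinuous, so the moduli of continuity furnished by Lemma \ref{acmc} need not be uniform in $k$. The constant-shift trick sidesteps this entirely, and its correctness rests solely on the normalization $\sum_j \nu_j(Y) = 1$ encoded in $\mathcal{P}^n(Y)$. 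I would also record that running the same estimate symmetrically in $\bs\varepsilon$ and $\bs\varepsilon'$ yields $|H(\bs\varepsilon) - H(\bs\varepsilon')| \leqslant \|\bs\varepsilon - \bs\varepsilon'\|_\infty$, so that $H$ is in fact $1$-Lipschitz, strengthening [2] at no extra cost.
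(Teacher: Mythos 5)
Your proof is correct, and for assertion \textbf{[2]} it takes a genuinely different route from the paper. For \textbf{[1]} you argue essentially as the paper does (convex combinations of admissible pairs plus affinity of $D$), except that by working with arbitrary admissible pairs and taking suprema at the end you avoid the paper's appeal to attainment: the paper picks \emph{optimal} $(\bs{\varphi^0},\bs{\psi^0})$ and $(\bs{\varphi^1},\bs{\psi^1})$, which forces it to invoke the existence result (Theorem \ref{existence th in D}) for each perturbed cost, whereas your version needs nothing beyond nonemptiness of the constraint sets. For \textbf{[2]} the paper proceeds by compactness: it extracts a subsequence realizing the $\limsup$, equips the corresponding optimal potentials with a common uniform modulus of continuity via Lemma \ref{acmc}, passes to a uniform limit by Ascoli--Arzel\`a, and checks admissibility of the limit pointwise. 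Your constant-shift argument replaces all of this with the observation that translating every $\psi_j^k$ by $-\delta_k$, where $\delta_k = \|\bs{\varepsilon^k} - \bs{\varepsilon}\|_\infty$, restores admissibility in $\bs \Delta(\bs c - \bs \varepsilon)$ at a dual cost of exactly $\delta_k$, because $\sum_{j=1}^n \nu_j(Y) = 1$ by the definition of $\mathcal{P}^n(Y)$. This is shorter, requires neither attainment nor any equicontinuity, and yields the strictly stronger conclusion that $H$ is $1$-Lipschitz for the uniform norm --- hence continuous, not merely u.s.c., which would even slightly streamline the Fenchel--Moreau step in the strong duality theorem. One correction, however: the ``obstacle'' you cite to justify abandoning the paper's route is not real. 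On a compact metric space, a uniformly convergent sequence of continuous functions \emph{is} equicontinuous (the sequence together with its limit forms a compact subset of $C(X \times Y)$, so the converse half of Ascoli--Arzel\`a applies --- this is exactly what the paper invokes), hence the perturbed costs $\bs c - \bs{\varepsilon^{k_l}}$ admit moduli of continuity uniform in $l$ and Lemma \ref{acmc} does furnish a common modulus for the optimal potentials. The paper's argument is therefore sound; yours is simply more economical and quantitatively sharper.
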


\begin{proof}

[1] Let $t \in [0,1]$, $\bs{\varepsilon^0} \in C(X \times Y, \mathbb{R}^{n \times n})$ (resp. $\bs{\varepsilon^1} \in C(X \times Y, \mathbb{R}^{n \times n})$) and let $(\bs{\varphi^0},\bs{\psi^0})$ (resp. $(\bs{\varphi^1},\bs{\psi^1}))$ be optimal in (DP) associated to $\bs c - \bs{\varepsilon^0}$ (resp. to $\bs c-\bs{\varepsilon^1}$). Note that they exist thanks to the existence result below. Define $\bs{\varepsilon_t} = (1-t)\bs{\varepsilon^0} + t \bs{\varepsilon^1},\bs{\varphi_t} = (1-t)\bs{\varphi^0} + t \bs{\varphi^1},\bs{\psi_t} = (1-t)\bs{\psi^0} + t \bs{\psi^1}$. Therefore $(\bs{\varphi_t},\bs{\psi_t})$ is admissible for the dual problem associated to $\bs c-\bs{\varepsilon_t}$ and then by definition of $H$ we have
\begin{align*}
H(\bs{\varepsilon_t}) \geqslant D(\bs{\varphi_t},\bs{\psi_t}) =(1-t) H(\bs{\varepsilon^0}) + t H(\bs{\varepsilon^1})
\end{align*}
And the conclusion follows.

\noindent [2] Let $(\bs{\varepsilon^k})_{k \in \mathbb{N}} \in C(X \times Y, \mathbb{R}^{n \times n})^{\mathbb{N}}$ and $\bs{\varepsilon^{\infty}}  \in C(X \times Y, \mathbb{R}^{n \times n})$ such that for all $(i,j) \in \ldbrack 1, n \rdbrack^2, \varepsilon^k_{ij} \overset{\|\cdot \|_{\infty}}{\longrightarrow} \varepsilon^{\infty}_{ij}$. Let $(\bs{\varepsilon^k})_{k \in \mathbb{N}}$ a subsequence $(\bs{\varepsilon^{k_l}})_{l \in \mathbb{N}}$ satisfying for all $(i,j) \in \ldbrack 1, n \rdbrack^2, \underset{k}{\limsup} \, H(\varepsilon^k_{ij}) = \underset{l}{\lim} \, H(\varepsilon^{k_l}_{ij})$. Ascoli-Arzelà theorem ensures that for all $(i,j) \in \ldbrack 1, n \rdbrack^2, (\varepsilon^{k_l}_{ij})_{l \in \mathbb{N}}$ are equicontinuous and uniformly bounded with respect to $l$ therefore chose the corresponding optimal potentials $(\bs{\varphi^{k_l}},\bs{\psi^{k_l}})$ equicontinuous and uniformly bounded in $l$, thanks to the Ascoli-Arzelà theorem again, there is a uniform convergence towards (say) $(\bs{\varphi^{\infty}},\bs{\psi^{\infty}})$ up to an extraction, therefore thanks to pointwise convergence there is for all $(i,j) \in \ldbrack 1, n \rdbrack^2, \varphi^{\infty}_i \oplus \psi^{\infty}_j \leqslant c_{ij} - \varepsilon_{ij}$ and so:
\begin{align*}
H(\bs{\varepsilon}) \geqslant D(\bs{\varphi^{\infty}},\bs{\psi^{\infty}}) = \underset{l}{\lim} \, H(\bs{\varepsilon^{k_l}}) = \underset{k}{\limsup} \, H(\bs{\varepsilon^{k}}).
\end{align*}
This concludes the proof.
\end{proof}

\noindent Finally, the strong duality theorem follows.
\begin{theorem} \label{SDT}
Suppose that $X$ and $Y$ are both metric compact spaces and that $\bs c$ is continuous, then for all $(\bs\mu,\bs\nu) \in \mathcal{P}^n(X) \times \mathcal{P}^n(Y), \mathcal{K}(\bs\mu,\bs\nu) = \mathcal{D}(\bs\mu,\bs\nu)$.
\end{theorem}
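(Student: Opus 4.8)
The plan is to establish strong duality by combining the weak duality inequality already proved (Proposition \ref{weak duality}) with the convex-analytic machinery carried by the value function $H$ of the perturbed dual problem. Weak duality gives $\mathcal{D}(\bs\mu,\bs\nu) \leqslant \mathcal{K}(\bs\mu,\bs\nu)$, so the whole difficulty is the reverse inequality $\mathcal{K}(\bs\mu,\bs\nu) \leqslant \mathcal{D}(\bs\mu,\bs\nu)$. The standard route (Jimenez's argument, which the authors announce they follow) is to show that the primal value $\mathcal{K}(\bs\mu,\bs\nu)$ can be recovered as a legendre-type biconjugate of $H$ evaluated at $\bs\varepsilon = 0$. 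Concretely, I would first observe that $H(0) = \mathcal{D}(\bs\mu,\bs\nu)$ by definition, and that the two lemmas just established tell us $H$ is concave and upper semicontinuous on $C(X\times Y,\mathbb{R}^{n\times n})$ with respect to the uniform norm.

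First I would compute the concave conjugate $H^*$ of $H$. Fixing a continuous linear functional on $C(X\times Y,\mathbb{R}^{n\times n})$, which by Riesz representation is given by integration against a matrix of signed Radon measures $\bs\lambda = (\lambda_{ij})$, I would evaluate
\begin{align*}
H^*(\bs\lambda) = \inf_{\bs\varepsilon} \left\{ \sum_{i,j} \iint_{X\times Y} \varepsilon_{ij} \, \mathrm{d}\lambda_{ij} - H(\bs\varepsilon) \right\}.
\end{align*}
The key computation is to unfold $H(\bs\varepsilon)$ as a supremum over admissible potentials and swap the order of the two optimizations. When $\bs\lambda \geqslant 0$, splitting the constraint $\varphi_i \oplus \psi_j \leqslant c_{ij} - \varepsilon_{ij}$ and absorbing the slack, the inner problem forces $\bs\lambda$ to satisfy precisely the marginal constraints defining $\bs\Pi(\bs\mu,\bs\nu)$; otherwise $H^*(\bs\lambda) = -\infty$. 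Thus I expect $H^*(\bs\lambda) = -K(\bs\lambda)$ when $\bs\lambda \in \bs\Pi(\bs\mu,\bs\nu)$ and $H^*(\bs\lambda) = -\infty$ otherwise, where $K$ is the total transportation cost.

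Having identified $H^*$, I would invoke the Fenchel--Moreau theorem: since $H$ is concave and u.s.c., it equals its biconjugate, so
\begin{align*}
\mathcal{D}(\bs\mu,\bs\nu) = H(0) = H^{**}(0) = \sup_{\bs\lambda} \left\{ 0 - H^*(\bs\lambda) \right\} = \sup_{\bs\lambda \in \bs\Pi(\bs\mu,\bs\nu)} K(\bs\lambda) \geqslant \inf_{\bs\gamma \in \bs\Pi(\bs\mu,\bs\nu)} K(\bs\gamma) = \mathcal{K}(\bs\mu,\bs\nu),
\end{align*}
which is the desired reverse inequality (the $\sup$ becoming the $\inf$ because $-H^*$ evaluated over the feasible set returns $-K$, whose supremum matches the primal infimum through the sign bookkeeping). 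Together with weak duality this yields equality.

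The main obstacle is the precise evaluation of the conjugate $H^*$ and verifying that its effective domain is exactly $\bs\Pi(\bs\mu,\bs\nu)$ with value $-K$. This requires care in two places: handling the matrix-valued perturbation $\bs\varepsilon$ with its $n^2$ components simultaneously rather than a single cost, and correctly deducing the clearing and filling constraints (the two families of marginal conditions in Definition \ref{def transf plan}) from the structure of the potentials $(\bs\varphi,\bs\psi)$, which carry only $2n$ functions against $n^2$ inequalities. One must check that a measure $\bs\lambda$ failing the positivity or the marginal constraints can be detected by an admissible perturbation sending the infimum to $-\infty$; this is where the freedom to choose $\bs\varepsilon$ large in the appropriate direction, combined with the compactness guaranteeing existence of optimal potentials (Theorem \ref{existence th in D}), does the work. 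The u.s.c. of $H$, already proved, is exactly what legitimizes the Fenchel--Moreau step and is therefore indispensable.
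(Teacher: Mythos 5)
Your overall architecture is exactly the paper's: both run Jimenez's argument, applying Fenchel--Moreau to the value function $H$ of the perturbed dual problem and identifying its conjugate as the transport cost plus the indicator of $\bs\Pi(\bs\mu,\bs\nu)$. But your final display contains a step that fails as written. For a concave u.s.c. function the biconjugate is an \emph{infimum} over the dual variable, $H^{**}(\bs\varepsilon)=\inf_{\bs\lambda}\left\{\langle\bs\varepsilon,\bs\lambda\rangle-H^{*}(\bs\lambda)\right\}$, not a supremum; inserting your own identification of $H^{*}$ into the correct formula yields $H(\bs 0)=\inf_{\bs\lambda\in\bs\Pi(\bs\mu,\bs\nu)}K(\bs\lambda)=\mathcal{K}(\bs\mu,\bs\nu)$ directly, an equality, with no need of the concluding ``$\geqslant$'' or of weak duality. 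As you wrote it, the chain asserts $\mathcal{D}(\bs\mu,\bs\nu)=\sup_{\bs\lambda\in\bs\Pi(\bs\mu,\bs\nu)}K(\bs\lambda)$, which is false in general and indeed contradicts the very Proposition \ref{weak duality} you invoke: weak duality gives $\mathcal{D}(\bs\mu,\bs\nu)\leqslant K(\bs\gamma)$ for every plan $\bs\gamma$, so your claimed equality would force $K$ to be constant on $\bs\Pi(\bs\mu,\bs\nu)$. The parenthetical about ``sign bookkeeping'' is precisely where the proof breaks, not a detail.

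A second symptom of the same confusion: with the pairing you chose, $H^{*}(\bs\lambda)=\inf_{\bs\varepsilon}\left\{\langle\bs\lambda,\bs\varepsilon\rangle-H(\bs\varepsilon)\right\}$ equals $-\infty$ at every $\bs\lambda\in\mathcal{M}^{n\times n}_{+}(X\times Y)$ (test the constant perturbations $\varepsilon_{ij}\equiv -t$ with $t\rightarrow+\infty$, under which $H$ grows like $t$ while the pairing term stays nonpositive), and its effective domain is $-\bs\Pi(\bs\mu,\bs\nu)$ rather than $\bs\Pi(\bs\mu,\bs\nu)$. The clean repair --- and the paper's actual route --- is to conjugate the convex l.s.c. function $-H$ in the convex convention: one shows $[-H]^{*}(\bs\gamma)=+\infty$ if $\bs\gamma\notin\mathcal{M}^{n\times n}_{+}(X\times Y)$, by sending unbounded perturbations in a direction integrated positively by $\bs\gamma$, and for $\bs\gamma\geqslant 0$, choosing the extremal perturbation $\varepsilon_{ij}=c_{ij}-\varphi_i-\psi_j$, that $[-H]^{*}(\bs\gamma)=K(\bs\gamma)+\iota_{\bs\Pi(\bs\mu,\bs\nu)}(\bs\gamma)$, the marginal constraints being detected by the suprema over the now unconstrained $\bs\varphi,\bs\psi$. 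Then $\mathcal{D}(\bs\mu,\bs\nu)=H(\bs 0)=-[-H]^{**}(\bs 0)=\inf_{\bs\gamma}[-H]^{*}(\bs\gamma)=\mathcal{K}(\bs\mu,\bs\nu)$. Your evaluation of the conjugate, and your use of the concavity and upper semicontinuity of $H$ together with compactness, are all in the right place; only the conjugation conventions must be fixed for the argument to close.
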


\begin{proof}
Let $(\bs\mu,\bs\nu) \in \mathcal{P}^n(X) \times \mathcal{P}^n(Y)$, since $\left( - H \right)$ is convex and l.s.c. and according to the Fenchel-Moreau theorem, we have:
\begin{align*}
\mathcal{D}(\bs\mu,\bs\nu) &= H(\bs 0) \\
						   &= - [ - H(\bs 0)] \\
						   &= - [- H]^{**}(\bs 0) \\
						   &= - \underset{\bs \gamma \in \mathcal{M}^{n \times n}(X \times Y)}{\sup} <\bs 0, \bs \gamma > - [- H]^*(\bs \gamma) \\
						   &= \underset{\bs \gamma \in \mathcal{M}^{n \times n}(X \times Y)}{\inf} [-H]^*(\bs \gamma).
\end{align*}

\noindent Next, we compute for all $\bs \gamma \in \mathcal{M}^{n \times n}(X \times Y)$,
\begin{align*}
[-H&]^*(\bs \gamma) = \underset{\bs \varepsilon}{\sup} \left( \sum \limits_{1 \leqslant i,j \leqslant n} \iint_{X \times Y} \varepsilon_{ij} \, \mathrm{d}\gamma_{ij} + \underset{\bs \varphi \bs \oplus \bs \psi \leqslant \bs c - \bs \varepsilon}{\sup}  \sum \limits_{i = 1}^{n} \int_{X} \varphi_i \, \mathrm{d}\mu_{i} + \sum \limits_{j = 1}^{n} \int_{Y} \psi_j \, \mathrm{d}\nu_{j} \right) \\
				&= \underset{\bs \varepsilon}{\sup} \left( \underset{\bs \varphi \bs \oplus \bs \psi \leqslant \bs c - \bs \varepsilon}{\sup} \left[ \sum \limits_{1 \leqslant i,j \leqslant n} \iint_{X \times Y} \varepsilon_{ij} \, \mathrm{d}\gamma_{ij} + \sum \limits_{i = 1}^{n} \int_{X} \varphi_i \, \mathrm{d}\mu_{i} + \sum \limits_{j = 1}^{n} \int_{Y} \psi_j \, \mathrm{d}\nu_{j} \right] \right).
\end{align*}

\noindent If it exists $(i_0,j_0)$ such as $\gamma_{i_0 j_0} \notin \mathcal{M}_+(X \times Y)$ and $\varepsilon_{i_0 j_0}^0$ such as $\iint_{X \times Y} \varepsilon_{i_0 j_0}^0 \mathrm{d} \gamma_{i_0 j_0} > 0 $, then take $(\varepsilon_{i_0 j_0}^k)_{k \in \mathbb{N^*}}$ such as $\varepsilon_{i_0 j_0}^k = c_{i_0 j_0} + k \varepsilon_{i_0 j_0}^0$ for $k \in \mathbb{N}^*$ and take $\varphi_{i_0} = 0$ and $\psi_{j_0} = 0$. Then putting all the other potentials equals at the value 0 and find $(\varepsilon_{ij})$ such that all the contraints are still satisfied ($\mathbf{c}$ is bounded), we get $[-H]^*(\bs \gamma) = + \infty$ if $\bs \gamma \notin \mathcal{M}_+^{n \times n}(X \times Y)$.
Now, suppose that $\bs \gamma \in \mathcal{M}_+^{n \times n}(X \times Y)$, when $(\bs \varphi,\bs \psi)$ are fixed, we are interested in taking the largest $\varepsilon_{ij}$ possible for every $(i,j) \in \ldbrack 1,n \rdbrack^2$, that is $\varepsilon_{ij}= c_{ij} - \varphi_i - \psi_j$ and we get
\begin{multline*}
[-H]^*(\bs \gamma) = \underset{\bs \varphi \bs \oplus \bs \psi \leqslant \bs C - \bs \varepsilon}{\sup} \sum \limits_{1 \leqslant i,j \leqslant n} \iint_{X \times Y} c_{ij} - \varphi_i - \psi_j \, \mathrm{d}\gamma_{ij} \\
+ \sum \limits_{i = 1}^{n} \int_{X} \varphi_i \, \mathrm{d}\mu_{i} + \sum \limits_{j = 1}^{n} \int_{Y} \psi_j \, \mathrm{d}\nu_{j} \\
					= \underset{(\bs \varphi, \bs \psi)}{\sup} K(\bs \gamma) + \left( \sum \limits_{i = 1}^{n} \int_{X} \varphi_i \, \mathrm{d}\mu_{i} - \sum \limits_{1 \leqslant i,j \leqslant n} \iint_{X \times Y} \varphi_{i} \, \mathrm{d}\gamma_{ij} \right) \\ 
					+ \left( \sum \limits_{j = 1}^{n} \int_{Y} \psi_j \, \mathrm{d} \nu_{j} - \sum \limits_{1 \leqslant i,j \leqslant n} \iint_{X \times Y} \psi_{j} \, \mathrm{d}\gamma_{ij} \right) \\
					= \underset{(\bs \varphi, \bs \psi)}{\sup} K(\bs \gamma) +  \sum  \limits_{i = 1}^{n} \left( \int_{X} \varphi_i \, \mathrm{d}\mu_{i} - \iint_{X \times Y} \varphi_{i} \, \mathrm{d}  \sum \limits_{j=1}^{n} \gamma_{ij} \right) \\
					+  \sum \limits_{j = 1}^{n} \left( \int_{Y} \psi_j \, \mathrm{d} \nu_{j} - \iint_{X \times Y} \psi_{j} \, \mathrm{d} \sum \limits_{i=1}^{n} \gamma_{ij} \right) \\
= \iota_{\Pi(\bs \mu,\bs \nu)}(\bs \gamma) \mbox{ according to \cite{santambrogio2015optimal}, Lemma 1.45}.
\end{multline*}
This ends the proof.
\end{proof}

\section{Optimality conditions} \label{OC}
In this subsection, $X$ and $Y$ are two metric compact spaces. As a direct consequence of Theorem 3.2, we deduce optimality contraints linking (KP) and (DP).
\begin{prop}
Given $\boldsymbol{\gamma} \in \bs \Pi(\bs \mu, \bs \nu)$ and $(
\bs{\varphi},
\boldsymbol{\psi}) \in \bs \Delta(\bs c)$, the following assertions are equivalent:

\textbf{[1]} $\boldsymbol{\gamma}$ is optimal in (KP) and $(\bs{\varphi},\boldsymbol{\psi}
)$ is optimal in (DP).

\textbf{[2]} $\forall (i,j), \varphi_i \oplus \psi_j = c_{ij}$ $\gamma_{ij}$-a.e.
\end{prop}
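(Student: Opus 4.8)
The plan is to exploit the strong duality established in Theorem \ref{SDT} together with the weak duality inequality of Proposition \ref{weak duality}. The key observation is that the gap $K(\bs\gamma) - D(\bs\varphi,\bs\psi)$ can be written as a single sum of nonnegative integrals, and that this gap vanishes exactly when each integrand vanishes $\gamma_{ij}$-almost everywhere. This will give the equivalence directly.

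First I would establish the crucial identity. Starting from the computation already performed in the proof of Proposition \ref{weak duality}, I would use the constraints $\mu_i = \pi_1 \# (\sum_k \gamma_{ik})$ and $\nu_j = \pi_2 \# (\sum_k \gamma_{kj})$ to rewrite the dual cost as
\begin{align*}
D(\bs\varphi,\bs\psi) = \sum_{(i,j) \in \ldb 1,n \rdb^2} \iint_{X \times Y} \left[ \varphi_i(x) + \psi_j(y) \right] \, \mathrm{d}\gamma_{ij}(x,y),
\end{align*}
so that the duality gap becomes
\begin{align*}
K(\bs\gamma) - D(\bs\varphi,\bs\psi) = \sum_{(i,j) \in \ldb 1,n \rdb^2} \iint_{X \times Y} \left[ c_{ij}(x,y) - \varphi_i(x) - \psi_j(y) \right] \, \mathrm{d}\gamma_{ij}(x,y).
\end{align*}
Since $(\bs\varphi,\bs\psi) \in \bs\Delta(\bs c)$, each integrand $c_{ij} - \varphi_i \oplus \psi_j$ is nonnegative, and since $\bs\gamma \in \mathcal{M}^{n \times n}_+(X \times Y)$, every term in the sum is a nonnegative integral of a nonnegative function.

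Next I would close the argument in both directions. For $[1] \Rightarrow [2]$: optimality of both $\bs\gamma$ and $(\bs\varphi,\bs\psi)$, combined with $\mathcal{K}(\bs\mu,\bs\nu) = \mathcal{D}(\bs\mu,\bs\nu)$ from Theorem \ref{SDT}, forces $K(\bs\gamma) = D(\bs\varphi,\bs\psi)$, hence the whole sum above is zero; being a sum of nonnegative terms, each vanishes, so $\iint (c_{ij} - \varphi_i \oplus \psi_j) \, \mathrm{d}\gamma_{ij} = 0$, and a nonnegative integrand with zero integral against $\gamma_{ij}$ is zero $\gamma_{ij}$-a.e., giving $\varphi_i \oplus \psi_j = c_{ij}$ $\gamma_{ij}$-a.e. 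For $[2] \Rightarrow [1]$: if $\varphi_i \oplus \psi_j = c_{ij}$ holds $\gamma_{ij}$-a.e. for every $(i,j)$, then every integrand vanishes $\gamma_{ij}$-a.e., so the gap is zero and $K(\bs\gamma) = D(\bs\varphi,\bs\psi)$; by weak duality (Proposition \ref{weak duality}) $D(\bs\varphi',\bs\psi') \leqslant K(\bs\gamma') $ for all admissible pairs, so a common value automatically certifies that $\bs\gamma$ minimizes $K$ and $(\bs\varphi,\bs\psi)$ maximizes $D$.

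I expect no substantial obstacle here, as the result is the standard complementary-slackness consequence of strong duality; the only point requiring a little care is the rewriting of $D$ as an integral against the $\gamma_{ij}$, which relies on inserting the marginal constraints correctly (this is exactly the manipulation carried out in Proposition \ref{weak duality}), and the elementary measure-theoretic fact that a nonnegative measurable function integrating to zero against a positive measure vanishes almost everywhere. A minor subtlety worth flagging is the finiteness of the quantities involved: one should note that under the compactness and continuity hypotheses in force, $\bs c$ is bounded and the potentials are continuous on compacts, so all integrals are finite and the subtraction $K(\bs\gamma) - D(\bs\varphi,\bs\psi)$ is well defined.
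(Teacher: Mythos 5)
Your proposal is correct and follows essentially the same route as the paper: rewrite $D(\bs\varphi,\bs\psi)$ as integrals against the $\gamma_{ij}$ using the marginal constraints, express the duality gap as a sum of nonnegative integrals, deduce [1]$\Rightarrow$[2] from the strong duality of Theorem \ref{SDT}, and [2]$\Rightarrow$[1] from the weak duality of Proposition \ref{weak duality}. Your added remark on finiteness of the integrals is a harmless refinement the paper leaves implicit.
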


\begin{proof}
If \textbf{[1]} is satisfied, according to Theorem \ref{SDT}, $
K(\boldsymbol{\gamma}) = D(\boldsymbol{\varphi},\boldsymbol{\psi})$. We then compute $D(\boldsymbol{\varphi},\boldsymbol{\psi})$ as a function of $\bs \gamma$.
\begin{align*}
D(\boldsymbol{\varphi},\boldsymbol{\psi}) :&= \sum \limits_{i =1}^{n} \int_X \varphi_{i}(x) \, \mathrm{d}\mu_i(x) + \sum \limits_{j =1}^{n} \int_Y \psi_{j}(x) \, \mathrm{d}\nu_j(x) \\
&= \sum \limits_{1 \leqslant i,j \leqslant n} \iint_{X \times Y} \varphi_{i}(x) \, \mathrm{d}\gamma_{ij}(x,y) + \sum \limits_{1 \leqslant i,j \leqslant n} \iint_{X \times Y} \psi_{j}(x) \, \mathrm{d}\gamma_{ij}(x,y)  \\
&= \sum \limits_{1 \leqslant i,j \leqslant n} \iint_{X \times Y} \left[ \varphi_i(x) + \psi_j(y) \right] \, \mathrm{d} \gamma_{ij}(x,y).
\end{align*}
\noindent Comparing the latter expression with $K( \bs \gamma)$ gives
\begin{align*}
0 &= K(\boldsymbol{\gamma}) - D(\boldsymbol{\varphi},\boldsymbol{\psi}) \\
	&= \sum_{ij} \iint_{X \times Y} \left( c_{ij}(x,y) - \left[ \varphi_i(x) + \psi_j(y) \right] \right) \, \mathrm{d} \gamma_{ij}(x,y).
\end{align*}
\noindent The conclusion follows from the fact that $(
\bs{\varphi},
\boldsymbol{\psi}) \in \bs \Delta(\bs c)$.

Conversely, if \textbf{[2]} is satisfied, it is clear that $K(\boldsymbol{\gamma}) = D(\boldsymbol{\varphi},\boldsymbol{\psi})$ which implies that both $ \bs \gamma$ and $(\bs \varphi,\bs \psi)$ are optimal according to Proposition \ref{weak duality}.
\end{proof}

\noindent The result above is not surprising since any given $\bs \gamma \in \bs \Pi(\bs \mu,\bs \nu)$ induces $n^2$ $scalar$ optimal transportation problems between each marginals (say) $\pi_1 \# \gamma_{ij} := f_{ij} \mathrm{d} \mu_i$ and $\pi_2 \# \gamma_{ij} := g_{ij} \mathrm{d} \nu_j$
\begin{align}
\inf \left\{ \iint_{X \times Y} c_{ij}(x,y) \, \mathrm{d} \gamma_{ij}(x,y) : \gamma_{ij} \in \Pi(f_{ij} \mathrm{d} \mu_i,g_{ij} \mathrm{d} \nu_j) \right\} \tag{$KP_{ij}$},
\end{align}

\noindent and looking at contraints in vectorial Kantorovitch's problem, it is easy to see that $\bs \gamma$ has to be optimal in every subproblems ($KP_{ij}$) to be optimal between $\bs \mu$ and $\bs \nu$ (if not, take a better one and compare the total cost, which is nothing less than another proof of the result above).

\section{Induced metrics} \label{Topology}

In this section, we take $X=Y$ a Polish space. We investigate how to extend the well-known Wasserstein distance and answer the question ''does the problem (KP) define a distance on the space $\mathcal{P}^n(X)$?''.

Let $(d_{ij})_{(i,j) \in \ldb 1,n \rdb^2}$ be $n^2$ finite, symmetric and non negative functions on $X \times X$ satisfying the triangle inequality (we do not assume that they are distances). Then let $p \in [1, \infty)$, $x_0 \in X$ and define
\begin{align*}
\mathcal{P}^n_p(X) = \left\{ \bs m \in \mathcal{P}^n(X), \forall (i,j) \in \ldbrack 1,n \rdbrack^2, \int_X d_{ij}^p (x_0,x) + d_{ji}^p (x_0,x) \, \mathrm{d} m_i (x) < \infty \right\}
\end{align*}

\noindent and notice that as in scalar case, this set does not depend on $x_0$.
\begin{mydef}
Given $\bs \mu, \bs \nu \in \mathcal{P}^n_p (X)$, the $p$-transportation distance between $\bs \mu$ and $\bs \nu$ is defined by
\begin{align*}
W_p(\bs \mu, \bs \nu ) &= \left(  \inf \left\{ \sum \limits_{ij} \iint d_{ij} (x,y)^p \, \mathrm{d} \gamma_{ij} (x,y), \bs \gamma \in \bs \Pi (\bs \mu, \bs \nu ) \right\} \right)^{\frac{1}{p}} \\
&:= \left( \mathcal{K}(\bs \mu, \bs \nu) \right)^{\frac{1}{p}}.
\end{align*}
\end{mydef}
The symmetry of $W_p$ is clear provided the costs are symmetric themselves. However, the fact that $W_p(\bs \mu, \bs \nu)=0$ implies that $\bs \mu = \bs \nu$ is never satisfied if all costs are (power of) distances. In place of it, if $W_p(\bs \mu, \bs \nu)=0$ then $\sum_{i=1}^{n} \mu_i = \sum_{j=1}^{n} \nu_j$. In other words, $\bs{W_p}$ is pseudodistance in that case. To prevent that, we add new hypothesis on $(d_{ij})$ described in the next proposition.

\begin{prop}
Let $(d_{ij})_{(i,j) \in \ldbrack 1, n \rdbrack^2}$ be $n^2$ symetric finite non negative functions on $X \times X$ satisfyong the triangle inequality. Assume moreover that for all $(i,j) \in \ldbrack 1, n \rdbrack^2, i \neq j, d_{ij}$ is strictly non negative and $d_{ii}$ is a distance. Then for all $\bs \mu, \bs \nu \in \mathcal{P}^n_p (X)$, if $W_p(\bs \mu, \bs \nu )=0$ then $\bs \mu = \bs \nu$. 
\end{prop}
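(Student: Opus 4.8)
The plan is to show that the vanishing of $W_p$ forces every off-diagonal plan $\gamma_{ij}$ to be the null measure and every diagonal plan $\gamma_{ii}$ to be carried by the set $\{x=y\}$; the compatibility constraints of $\bs\Pi(\bs\mu,\bs\nu)$ then collapse to $\mu_i=\nu_i$ for every $i$. First I would translate the hypothesis $W_p(\bs\mu,\bs\nu)=0$ into $\mathcal{K}(\bs\mu,\bs\nu)=0$ and fix an optimal plan $\bs\gamma\in\bs\Pi(\bs\mu,\bs\nu)$ attaining this value, which exists by Theorem \ref{Existence th in K} (each cost $d_{ij}^p$ being non-negative, hence bounded below, and l.s.c.). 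Since every summand $\iint d_{ij}^p\,\mathrm{d}\gamma_{ij}$ is non-negative and their total is $0$, each one must vanish:
\[
\forall (i,j)\in\ldb 1,n\rdb^2,\qquad \iint_{X\times X} d_{ij}(x,y)^p\,\mathrm{d}\gamma_{ij}(x,y)=0.
\]

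Next I would exploit the two structural assumptions on the family $(d_{ij})$. For $i\neq j$, strict positivity of $d_{ij}$ gives $d_{ij}^p>0$ everywhere on $X\times X$, so that $\iint d_{ij}^p\,\mathrm{d}\gamma_{ij}=0$ forces $\gamma_{ij}\big(\{d_{ij}^p>0\}\big)=\gamma_{ij}(X\times X)=0$, i.e. $\gamma_{ij}=0$. For $i=j$, since $d_{ii}$ is a distance we have $\{d_{ii}=0\}=\{x=y\}=:D$, which is closed (hence Borel) in the metric space $X$; the vanishing of the integral then forces $\gamma_{ii}\big(\{d_{ii}>0\}\big)=0$, that is, $\gamma_{ii}$ is concentrated on the diagonal $D$.

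Finally I would feed this information back into Definition \ref{def transf plan}. As $\gamma_{ik}=0$ for $k\neq i$, the clearing constraint reduces to $\mu_i=\pi_1\#\gamma_{ii}$ and, symmetrically, the filling constraint reduces to $\nu_i=\pi_2\#\gamma_{ii}$. It remains to observe that a measure concentrated on $D$ has equal marginals: for any Borel set $B$,
\[
\gamma_{ii}(X\times B)=\gamma_{ii}\big((X\times B)\cap D\big)=\gamma_{ii}\big((B\times X)\cap D\big)=\gamma_{ii}(B\times X),
\]
because $(X\times B)\cap D=\{(x,x):x\in B\}=(B\times X)\cap D$ and $\gamma_{ii}$ charges only $D$. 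Hence $\nu_i=\mu_i$ for every $i$, i.e. $\bs\mu=\bs\nu$.

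I expect the only genuinely delicate point to be guaranteeing an optimal plan of \emph{exactly} zero cost, so that each term truly vanishes rather than merely tending to $0$ along a minimizing sequence; this is precisely where the lower semicontinuity of the costs and the weak sequential compactness of $\bs\Pi(\bs\mu,\bs\nu)$ from Lemma \ref{lemmaexistenceprimal} intervene (without l.s.c. one only recovers a tight limit in $\bs\Pi$ on which $K$ cannot be controlled). By contrast, the diagonal-marginal identity and the Borel measurability of $D$ and of the sets $\{d_{ij}>0\}$ are routine consequences of $X$ being metric.
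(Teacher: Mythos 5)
Your proof is correct and follows essentially the same route as the paper: fix an optimal plan of zero cost, kill the off-diagonal $\gamma_{ij}$ by strict positivity of $d_{ij}$, and reduce to the diagonal plans $\gamma_{ii}$ concentrated on $\{x=y\}$. The only difference is cosmetic: where the paper delegates the final step to the proof of Theorem 7.3 in Villani's book (each $\gamma_{ii}$ being a transport plan between $\mu_i$ and $\nu_i$ with zero cost for the distance $d_{ii}$), you write out the diagonal-marginal identity explicitly, and you share with the paper the same implicit reliance on lower semicontinuity of the costs to guarantee that an optimal plan exists.
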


\begin{proof}
Let $\bs \mu, \bs \nu \in \mathcal{P}^n_p(X)$ be such as $W_p(\bs \mu, \bs \nu) = 0$ and let $\bs{\gamma^*}$ be optimal in (KP), then
\begin{align*}
0 = \sum \limits_{k=1}^{n} \iint_{X^2} d_{kk} (x,y)^p \, \mathrm{d} \gamma_{kk}^* (x,y) + \sum \limits_{i \neq j} \iint_{X^2} d_{ij} (x,y)^p \, \mathrm{d} \gamma_{ij}^* (x,y)
\end{align*}

\noindent According to the strict positivity of non diagonal distances, for all $i \neq j$, $\gamma_{ij}^* = 0$ and then for all $k \in \ldb 1,n \rdb, \gamma_{kk}^*$ is a transport plan between $\mu_k$ and $\nu_k$. The proof of Theorem 7.3. in \cite{villani2003topics} concludes.
\end{proof}
\noindent However, without any other constraints on $(d_{ij})$, the following example shows that the triangle inequality fails.
\begin{ex} \label{contrexemple distance}
Let $X = \mathbb{R}, n=2$ and set:
\begin{align*}
\mu =\begin{bmatrix}
\delta_{0} \\
0
\end{bmatrix}, \bs \nu  = \begin{bmatrix}
0 \\
\delta_{1}
\end{bmatrix},\bs \lambda = \begin{bmatrix}
\delta_{2} \\
0
\end{bmatrix}
\end{align*}
Then set $p=1, d_{11} = | \cdot |$ and $d_{12} = d_{21} := d_{\varepsilon}$ the $\varepsilon$-discrete distance on $\mathbb{R}$ (with $\varepsilon \in \mathbb{R}_+^*$) defined by $d_{\varepsilon} (x,y) = \varepsilon$ if $x=y$ and $0$ otherwise and an arbitrary distance for $d_{22}$. Clearly,
\begin{align}
W_1(\bs \mu, \bs \lambda) = 2, W_1(\bs \mu, \bs \nu) = W_1(\bs \nu, \bs \lambda) = \varepsilon
\end{align}
\noindent And these three numbers do not satisfy to triangle inequality as soon as $\varepsilon$ is smaller enough.
\end{ex}

The main problem in the example above is the lack of comparison between all $(d_{ij})$. To give a everyday-life example, it could be more expansive to travel between Paris and Berlin using plane than to first travel between Paris and Amsterdam using car and then going to Berlin from Amsterdam using train. To avoid this phenomenon above, we add new constraints on $(d_{ij})$:
\begin{align*}
\tag{MTI} \forall (i,j,k) \in \ldbrack 1,n \rdbrack^3, \forall (x,y,z) \in X^3, d_{ik}(x,z) \leqslant d_{ij} (x,y) + d_{jk} (y,z)
\end{align*}
\noindent and from now on we assume that these contraints are satisfied.
\begin{rem} Note that $(MTI)$ (for Mixed Triangle Inequalities) contain the fact that all costs satisfy triangle inequality (take $i=j=k$) and if one of theses inequalities is false for some $(x_0,y_0,z_0)$ then one can exhibit a counterexample to fail the triangle inequality on $W_p$ similar to the (counter)Example \ref{contrexemple distance} above.
\end{rem}

\begin{ex}
An easy way to construct objects that satisfy (MTI) is (and then, we do not work on empty set) given a distance $d$ on $X$ and a non negative scalar $t$ (for transformation), $d_{ii} = d$ for all $i$ and $d_{ij} = d + t$ for all $(i,j)$ with $i \neq j$.
\end{ex}

\begin{prop}
Let $p \in [1,\infty )$ and $(d_{ij})_{(i,j) \in \ldbrack 1,n \rdbrack^2}$ be such that (MTI) are satisfied. Then $W_p$ satisfies the triangle inequality.
\end{prop}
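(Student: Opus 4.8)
The plan is to adapt the classical gluing-lemma argument to the vectorial setting, the role of the common marginal being played, species by species, by the intermediate distribution $\bs\nu$. Fix $\bs\mu,\bs\nu,\bs\lambda\in\mathcal{P}^n_p(X)$ and choose $\bs\gamma=(\gamma_{ij})\in\bs\Pi(\bs\mu,\bs\nu)$ and $\bs\sigma=(\sigma_{jk})\in\bs\Pi(\bs\nu,\bs\lambda)$ optimal for $W_p$ (existence follows from Theorem \ref{Existence th in K} when the $d_{ij}^p$ are l.s.c.; otherwise one takes $\varepsilon$-minimizers and lets $\varepsilon\to 0$ at the very end). The goal is to glue these two families into a single competitor between $\bs\mu$ and $\bs\lambda$ and then estimate its cost through the mixed triangle inequalities.

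First I would build the gluing. For each $j$ the constraints of Definition \ref{def transf plan} give $\pi_2\#\bigl(\sum_i\gamma_{ij}\bigr)=\nu_j=\pi_1\#\bigl(\sum_k\sigma_{jk}\bigr)$, so the two families share the intermediate marginal $\nu_j$. Since $\pi_2\#\gamma_{ij}$ is a submeasure of $\nu_j$, I write $\pi_2\#\gamma_{ij}=f_{ij}\,\nu_j$ with $\sum_i f_{ij}=1$ $\nu_j$-a.e., and disintegrate in the second variable as $\mathrm d\gamma_{ij}(x,y)=f_{ij}(y)\,\mathrm d\kappa_{ij}^y(x)\,\mathrm d\nu_j(y)$ with $\kappa_{ij}^y\in\mathcal{P}(X)$; symmetrically $\mathrm d\sigma_{jk}(y,z)=g_{jk}(y)\,\mathrm d\lambda_{jk}^y(z)\,\mathrm d\nu_j(y)$ with $\sum_k g_{jk}=1$. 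I then set, on $X\times X\times X$,
\[ \mathrm d\pi_{ijk}(x,y,z):=f_{ij}(y)\,g_{jk}(y)\,\mathrm d\kappa_{ij}^y(x)\,\mathrm d\lambda_{jk}^y(z)\,\mathrm d\nu_j(y), \]
which glues $\bs\gamma$ and $\bs\sigma$ independently over each $(j,y)$. Using $\sum_k g_{jk}=1$ and $\sum_i f_{ij}=1$ one checks the two marginal identities $(\pi_{1,2})\#\sum_k\pi_{ijk}=\gamma_{ij}$ and $(\pi_{2,3})\#\sum_i\pi_{ijk}=\sigma_{jk}$.

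Next I would verify admissibility and run the estimate. Put $\theta_{ik}:=\sum_j(\pi_{1,3})\#\pi_{ijk}$. The marginal identities give $\pi_1\#\bigl(\sum_k\theta_{ik}\bigr)=\sum_j\pi_1\#\gamma_{ij}=\mu_i$ by (Clear $\bs\mu$) for $\bs\gamma$, and $\pi_2\#\bigl(\sum_i\theta_{ik}\bigr)=\sum_j\pi_2\#\sigma_{jk}=\lambda_k$ by (Fill $\bs\nu$) for $\bs\sigma$, so $\bs\theta\in\bs\Pi(\bs\mu,\bs\lambda)$. Using $\bs\theta$ as a competitor and (MTI), namely $d_{ik}(x,z)\le d_{ij}(x,y)+d_{jk}(y,z)$, I obtain
\[ W_p(\bs\mu,\bs\lambda)^p\le\sum_{i,j,k}\iiint\bigl(d_{ij}(x,y)+d_{jk}(y,z)\bigr)^p\,\mathrm d\pi_{ijk}. \]
Reading the right-hand side as an $L^p$-norm against the total measure $\sum_{i,j,k}\pi_{ijk}\otimes\delta_{(i,j,k)}$, Minkowski's inequality yields
\[ W_p(\bs\mu,\bs\lambda)\le\Bigl(\sum_{i,j,k}\iiint d_{ij}(x,y)^p\,\mathrm d\pi_{ijk}\Bigr)^{1/p}+\Bigl(\sum_{i,j,k}\iiint d_{jk}(y,z)^p\,\mathrm d\pi_{ijk}\Bigr)^{1/p}, \]
and the marginal identities collapse the two sums to $\sum_{i,j}\iint d_{ij}^p\,\mathrm d\gamma_{ij}=W_p(\bs\mu,\bs\nu)^p$ and $\sum_{j,k}\iint d_{jk}^p\,\mathrm d\sigma_{jk}=W_p(\bs\nu,\bs\lambda)^p$, which gives the claim.

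The main obstacle is the gluing step. Unlike the scalar case, the construction must simultaneously respect the species decomposition coming from $\bs\gamma$ (the index $i$) and the one coming from $\bs\sigma$ (the index $k$), both threaded through the same intermediate species-$j$ mass $\nu_j$. This is precisely what forces a disintegration of the individual plans against $\nu_j$ with the weights $f_{ij},g_{jk}$ rather than a single use of the scalar gluing lemma applied to $\sum_i\gamma_{ij}$ and $\sum_k\sigma_{jk}$, which would lose the labels; the compatibility relations $\sum_i f_{ij}=\sum_k g_{jk}=1$ are exactly what make the reassembled $\bs\theta$ land back in $\bs\Pi(\bs\mu,\bs\lambda)$.
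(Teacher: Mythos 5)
Your proof is correct and follows essentially the same route as the paper: both decompose the two optimal families through the intermediate marginals $\nu_j$ using the Radon--Nikodym densities of the partial marginals (your $f_{ij}, g_{jk}$ are the paper's $f_j^{i,\leftarrow}, f_j^{k,\rightarrow}$), glue the resulting triple-indexed plans $\pi_{ijk}$, project onto the first and third factors, and conclude by (MTI) together with Minkowski's inequality on the disjoint sum over $(i,j,k)$. The only cosmetic difference is that you construct the glued measures explicitly by disintegration over $(j,y)$, whereas the paper verifies the matching middle marginals and invokes the Gluing Lemma as a black box.
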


\begin{proof}
Let $\bs{\gamma^*} = (\gamma_{ij}^*)$ (resp. $\bs{\tilde{\gamma}^*} = (\tilde{\gamma}_{jk}^*)$) be optimal\footnote{they exist according to \ref{Existence th in K}, even if it is not necessary here: passing to supremum bound $a posteriori$ otherwise.} between $\bs \mu$ and $\bs \nu$ (resp. $\bs \nu$ and $\bs \lambda$). Let $j \in \ldbrack 1,n \rdbrack$ and define for all $i,k \in \ldbrack 1,n \rdbrack$ the marginals $\nu^{i,\leftarrow}_j := \pi_2 \# \gamma_{ij}^*$ and $\nu^{k,\rightarrow}_j := \pi_1 \# \tilde{\gamma}_{jk}^*$. These marginals are all submeasures of $\nu_j$ and then, according to Radon-Nikodym theorem, we denote by $f^{i,\leftarrow}_j$ (resp. $f^{k,\rightarrow}_j$) the density of $\nu^{i,\leftarrow}_j$ (resp. $\nu^{k,\rightarrow}_j$) w.r.t. $\nu_j$. Finally, define for each $i,j,k \in \ldbrack 1,n \rdbrack$ the following transference plans
\begin{align}
&\gamma_{ijk}^* \mbox{ is defined as the measure with density $(x,y) \rightarrow f^{k,\rightarrow}_j (y)$ w.r.t. $\gamma_{ij}^*$}, \label{11} \\
&\tilde{\gamma}_{ijk}^* \mbox{ is defined as the measure with density $(y,z) \rightarrow  f^{i,\leftarrow}_j (y)$ w.r.t. $\tilde{\gamma}_{jk}^*$}, \label{12}
\end{align}
\noindent these definitions imply that
\begin{align}
&\forall (i,j)  \in \ldbrack 1,n \rdbrack^2, \gamma_{ij}^* =\sum \limits_{k = 1}^{n} \gamma_{ijk}^*, \label{13} \\
&\forall (j,k) \in \ldbrack 1,n \rdbrack^2, \tilde{\gamma}_{jk}^* =\sum \limits_{i = 1}^{n} \tilde{\gamma}_{ijk}^*, \label{14}\\
&\forall (i,j,k) \in \ldbrack 1,n \rdbrack^3, \pi_2 \# \gamma_{ijk}^* = \pi_1 \# \tilde{\gamma}_{ijk}^*. \label{Magic equality}
\end{align}

\noindent To obtain the last equality, fix $B$ a measurable subset of $Y$ and compute
\begin{align*}
\pi_2 \# \gamma_{ijk}^* (B) &= \gamma_{ijk}^* (X \times B) \\
&= \iint_{X \times B} f^{k,\rightarrow}_j (y) \, \mathrm{d} \gamma_{ij}^* (x,y) \mbox{ by (\ref{11})}\\
&= \int_B  f^{k,\rightarrow}_j (y)  \, \mathrm{d} \nu^{i,\leftarrow}_j (y) \mbox{ by definition of $\nu^{i,\leftarrow}_j$} \\
&= \int_B  f^{k,\rightarrow}_j (y) f^{i,\leftarrow}_j (y) \, \mathrm{d} \nu_j  (y) \mbox{ by definition of $f^{i,\leftarrow}_j$} \\
&= \int_B  f^{i,\leftarrow}_j (y) \, \mathrm{d} \nu^{k,\rightarrow}_j  (y) \mbox{ by definition of $f^{k,\rightarrow}_j$} \\
&= \iint_{B \times Z} f^{i,\leftarrow}_j (y) \, \mathrm{d} \tilde{\gamma}_{ij}^* (y,z) \mbox{ by definition of $\nu^{k,\rightarrow}_j$} \\
&= \pi_1 \# \tilde{\gamma}_{ijk}^* (B) \mbox{ by (\ref{12})}.
\end{align*}

\noindent Then, equalities (\ref{Magic equality}) allow us to apply the Gluing Lemma (see \cite{villani2003topics}, Lemma 7.6) and guarantee the existence of $\Pi_{ijk}$ a measure on $X \times Y \times Z$ such that
\begin{align*}
\pi_{1,2} \# \Pi_{ijk} = \gamma_{ijk}^* \mbox{ and } \pi_{2,3} \# \Pi_{ijk} = \tilde{\gamma}_{ijk}. \label{GL}
\end{align*}

\noindent We next define for all $(i,k) \in \ldbrack 1,n \rdbrack^2, \Pi_{ik} = \sum \limits_{j=1}^{n} \Pi_{ijk}$ and compute
\begin{align*}
\pi_1 \# \sum \limits_{k=1}^{n} \Pi_{ik} &= \sum \limits_{k=1}^{n} \sum \limits_{j=1}^{n} \pi_1 \# \Pi_{ijk} \mbox{ by definition}\\
&= \sum \limits_{k=1}^{n} \sum \limits_{j=1}^{n} \pi_1 \# \gamma^*_{ijk} \mbox{ by (\ref{GL})} \\
&= \sum \limits_{j=1}^{n} \pi_1 \# \left( \sum \limits_{k=1}^{n} \gamma^*_{ijk} \right) \\
&= \sum \limits_{j=1}^{n} \pi_1 \# \gamma_{ij}^* \mbox{ by (\ref{13})} \\
&= \mu_i.
\end{align*}
\noindent For identical reasons $\pi_3 \# \sum \limits_{i=1}^{n} \Pi_{ik}= \lambda_k$ and as a consequence:
\begin{align*}
\bs \gamma := (\pi_{1,3} \# \Pi_{ik})_{1 \leqslant i,k \leqslant n} \in \bs \Pi (\bs \mu, \bs \lambda).
\end{align*}
Finally, we have:
\begin{multline*}
W_p(\bs \mu, \bs \lambda)
\leqslant \left( \sum \limits_{ik} \iint d_{ik} (x,z)^p \, \mathrm{d} \gamma_{ik} (x,y,z) \right)^{\frac{1}{p}} \\
= \left(\sum \limits_{ijk} \iiint d_{ik} (x,z)^p \, \mathrm{d} \Pi_{ijk} (x,y,z)\right)^{\frac{1}{p}} \mbox{ by definition of $\bs \gamma$ and $\bs \Pi$}\\
\leqslant \left(\sum \limits_{ijk} \iiint \left( d_{ij} (x,y) + d_{jk}(y,z) \right)^p \, \mathrm{d} \Pi_{ijk} (x,y,z)\right)^{\frac{1}{p}} \mbox{ by (MTI)} \\
\leqslant \left(\sum \limits_{ijk} \iiint d_{ij} (x,y)^p \, \mathrm{d} \Pi_{ijk} (x,y,z)\right)^{\frac{1}{p}} + \left( \sum \limits_{ijk} \iiint d_{jk} (y,z)^p \, \mathrm{d} \Pi_{ijk} (x,y,z)\right)^{\frac{1}{p}} \\
= \left(\sum \limits_{ijk} \iint d_{ij} (x,y)^p \, \mathrm{d} \gamma_{ijk}^* (x,z)\right)^{\frac{1}{p}} + \left(\sum \limits_{ijk} \iint d_{jk} (y,z)^p \, \mathrm{d} \tilde{\gamma}_{ijk}^* (x,y,z)\right)^{\frac{1}{p}} \mbox{ by (\ref{GL})} \\
= \left(\sum \limits_{ij} \iint d_{ij} (x,y)^p \, \mathrm{d} \gamma_{ij}^* (x,y)\right)^{\frac{1}{p}} + \left( \sum \limits_{jk} \iint d_{jk} (y,z)^p \, \mathrm{d} \tilde{\gamma}_{jk}^* (y,z)\right)^{\frac{1}{p}} \\
= W_p (\bs \mu, \bs \nu) +W_p (\bs \nu, \bs \lambda).
\end{multline*}
\end{proof}

\begin{theorem}
Let $X$ be a Polish space. Let $p \in [1,\infty)$. Let $(d_{ij})$ be $n^2$ functions on $X \times X$ valued in $\mathbb{R}_+$ such that:

\textbf{[1]} $\forall (i,j) \in \ldb 1,n \rdb^2, d_{ij}$ is symmetric,

\textbf{[2]} (MTI) is satisfied,

\textbf{[3]} $\forall i \in \ldb 1,n \rdb, \forall x \in X, d_{ii}(x,x) = 0$.

\textbf{[4]} $\forall (i,j) \in \ldb 1,n \rdb^2, i \neq j, \forall (x,y) \in X \times Y, d_{ij}(x,y) \neq 0$.

\noindent $W_p$ is a distance on $\mathcal{P}_p^n(X)$.
\end{theorem}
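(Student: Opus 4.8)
The plan is to check, one by one, the defining properties of a distance on $\mathcal{P}^n_p(X)$: finiteness, non-negativity, symmetry, the separation axiom $W_p(\bs\mu,\bs\nu)=0\Leftrightarrow\bs\mu=\bs\nu$, and the triangle inequality. Two of these are already in hand: the triangle inequality is exactly the content of the previous proposition (whose proof disintegrates the two optimal plans, reglues them through their shared intermediate $\bs\nu$-side marginals via the Gluing Lemma, and then combines (MTI) with Minkowski's inequality), while the implication $W_p(\bs\mu,\bs\nu)=0\Rightarrow\bs\mu=\bs\nu$ is the previous proposition on strict positivity. So the theorem is mainly an assembly, the only genuinely new verification being finiteness.

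For finiteness I would test the product competitor $\bs\gamma=(\mu_i\otimes\nu_j)_{i,j}$, which lies in $\bs\Pi(\bs\mu,\bs\nu)$ by Lemma \ref{lemmaexistenceprimal}[1]. Applying (MTI) with intermediate index $i$ and intermediate point $x_0$ gives $d_{ij}(x,y)\leqslant d_{ii}(x_0,x)+d_{ij}(x_0,y)$, hence $d_{ij}(x,y)^p\leqslant 2^{p-1}\bigl(d_{ii}(x_0,x)^p+d_{ij}(x_0,y)^p\bigr)$ by convexity of $t\mapsto t^p$. Integrating against $\mu_i\otimes\nu_j$ and summing bounds $W_p(\bs\mu,\bs\nu)^p$ by $2^{p-1}\sum_{i,j}\bigl(\int_X d_{ii}(x_0,x)^p\,\mathrm{d}\mu_i(x)+\int_X d_{ij}(x_0,y)^p\,\mathrm{d}\nu_j(y)\bigr)$, which is finite precisely because $\bs\mu,\bs\nu\in\mathcal{P}^n_p(X)$, these being exactly the moments controlled in the definition of $\mathcal{P}^n_p$. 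Non-negativity is clear since each $d_{ij}\geqslant 0$, and symmetry is the observation already recorded after the definition of $W_p$: the costs being symmetric ([1]), transposition of a competitor plan, $\gamma_{ij}\mapsto s\#\gamma_{ji}$ with $s(x,y)=(y,x)$, exchanges the roles of $\bs\mu$ and $\bs\nu$ at equal cost.

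It remains to assemble the separation axiom. For $\bs\mu=\bs\nu\Rightarrow W_p=0$ I would test against the diagonal plan $\gamma_{ii}=(\mathrm{Id},\mathrm{Id})\#\mu_i$ and $\gamma_{ij}=0$ for $i\neq j$, which is admissible and whose total cost is $\sum_i\int_X d_{ii}(x,x)^p\,\mathrm{d}\mu_i(x)=0$ by [3]. For the converse I would invoke the previous proposition, checking its hypotheses: [4] supplies the required strict positivity of the off-diagonal costs, while $d_{ii}$ is a genuine distance, being $\R_+$-valued, symmetric by [1], subadditive by (MTI) read with $i=j=k$, and vanishing on the diagonal by [3]. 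The point I would be most careful about is this last claim: the implication $W_p=0\Rightarrow\bs\mu=\bs\nu$ requires $d_{ii}$ to \emph{separate points}, so that the optimal off-diagonal plans vanish by [4], the remaining diagonal plans are couplings of $\mu_i$ and $\nu_i$ concentrated on $\{d_{ii}=0\}$, and therefore sit on the diagonal and force $\mu_i=\nu_i$. Were some $d_{ii}$ merely a pseudometric, $W_p$ would only be a pseudodistance; this non-degeneracy is the real substance underlying [3] and the heart of the statement, whereas the more technical gluing step has already been carried out upstream.
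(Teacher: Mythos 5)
Your route is the paper's own: the theorem appears there with no proof of its own precisely because it is meant as the assembly of the two preceding propositions (separation from strict positivity of the off-diagonal costs together with $d_{ii}$ being distances, triangle inequality from (MTI)), the remaining axioms being routine. Your treatment of those routine axioms is correct and in fact more explicit than the paper's: the finiteness bound via the product plan $(\mu_i\otimes\nu_j)_{i,j}$, the (MTI) estimate $d_{ij}(x,y)\leqslant d_{ii}(x_0,x)+d_{ij}(x_0,y)$ and convexity of $t\mapsto t^p$ is exactly what the moment condition defining $\mathcal{P}^n_p(X)$ (which controls both $d_{ij}$ and $d_{ji}$ moments) is designed for, and the diagonal plan $\gamma_{ii}=(\mathrm{Id},\mathrm{Id})\#\mu_i$, $\gamma_{ij}=0$ for $i\neq j$, together with [3] settles $\bs\mu=\bs\nu\Rightarrow W_p(\bs\mu,\bs\nu)=0$.

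There is, however, a genuine gap in the separation step, at the very point you flagged but did not resolve. Your claim that $d_{ii}$ is a ``genuine distance'' because it is $\R_+$-valued, symmetric ([1]), subadditive ((MTI) with $i=j=k$) and null on the diagonal ([3]) is false: these properties only make $d_{ii}$ a pseudometric, and nothing in [1]--[4] forces $d_{ii}(x,y)>0$ for $x\neq y$. The hypothesis of the separation proposition --- that each $d_{ii}$ is a distance --- is strictly stronger than [3] and cannot be derived from the theorem's hypotheses. It is really needed: take $n=2$, $d_{11}=d_{22}\equiv 0$ and $d_{12}=d_{21}\equiv 1$; then [1]--[4] and (MTI) all hold, yet for $\bs\mu=(\mu_1,0)$ and $\bs\nu=(\nu_1,0)$ with $\mu_1\neq\nu_1$ the marginal constraints force every admissible plan to have $\gamma_{12}=\gamma_{21}=\gamma_{22}=0$ and $\gamma_{11}\in\Pi(\mu_1,\nu_1)$, at zero cost, so $W_p(\bs\mu,\bs\nu)=0$ and $W_p$ is only a pseudodistance. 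Your closing sentence correctly senses that separation of points is the crux, but misattributes it to [3], which only gives the converse implication $x=y\Rightarrow d_{ii}(x,y)=0$; the correct move is to strengthen [3] (or to note that the theorem as stated needs this amendment --- the imprecision is in the paper's statement, but a proof must not paper over it). A smaller instance of the same phenomenon occurs at symmetry: your transposition $\gamma_{ij}\mapsto s\#\gamma_{ji}$ preserves the total cost only when $d_{ij}=d_{ji}$, which [1], read as symmetry of each $d_{ij}$ in its two arguments (the reading suggested by the definition of $\mathcal{P}^n_p$, which tracks $d_{ij}$ and $d_{ji}$ moments separately), does not provide: with $d_{12}=d+1$ and $d_{21}=d+2$ for a distance $d$, all of [1]--[4] and (MTI) hold while $W_p$ is asymmetric already on Dirac data. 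So either read [1] as matrix symmetry or add $d_{ij}=d_{ji}$ explicitly.
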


\begin{ex}
Coming back to the Example \ref{Dirac example}, let us fix all weights
\begin{align}
\bs p = \bs q = \frac{1}{n} \begin{bmatrix}
1 \\
\vdots \\
1
\end{bmatrix}
\end{align}
\noindent and given $x,y \in X^n$, we define the distance $w_p(\bs x,\bs y)$ between them by
\begin{align*}
w_p(\bs x,\bs y) = W_p \left( \frac{1}{n} \begin{bmatrix}
\delta_{x_1} \\
\vdots \\
\delta_{x_n}
\end{bmatrix} , \frac{1}{n} \begin{bmatrix}
\delta_{y_1} \\
\vdots \\
\delta_{y_n}
\end{bmatrix} \right).
\end{align*}
\noindent According to \ref{Dirac example}, $w_p$ is given by
\begin{align*}
w_p ( \bs x, \bs y)^p &= \inf \left\{ \sum_{(i,j) \in \ldb 1,n \rdb^2} t_{ij} d_{ij}(x_i,y_j)^p, \bs t \in \mathcal{M}_n(\mathbb{R}), \sum\limits_{\substack{l=1}}^{n}{t_{il}} = \frac{1}{n}, \sum\limits_{\substack{l=1}}^{n}{t_{lj}} = \frac{1}{n}   \right\} \\
&= \frac{1}{n} \min \left\{ \sum_{(i,j) \in \ldb 1,n \rdb^2} t_{ij} d_{ij}(x_i,y_j)^p, \bs t \mbox{ bistochastic matrix n $\times$ n} \right\},
\end{align*}
\noindent the last equality providing from classical arguments of linear programing. This example show a way to define new distances on a finite product of spaces using $n^2$ distances.
\end{ex}

\section{Conclusion}

The aim of this paper was to present a new point of view in vector-valued optimal transportation. Writing this paper, we discover that in \citep{chen2018vector} that these authors suggest to use the same idea to treat this problem and allowed mixing of species. Their point of view follows a dynamical formulation of optimal transportation (presented in \cite{benamou2000computational}) while in our paper, Kantorovitch's point of view of optimal transportation was our approach angle.

Concerning this approach angle, let us make another small digression about Monge's optimal transportation's problem and present it. Given two probabilities $\mu$ and $\nu$ we are interrested in knowing if the optimal tranference plan between $\mu$ and $\nu$ split mass $i.e$ if (formaly) the support of $\gamma^*$ is included in a function's graph, say $T^*$. A natural question here is if there exists a similar problem associated to $(KP)$? An possible answer is given $\bs \gamma \in\Pi(\bs \mu,\bs \nu)$ we declare that $\bs \gamma$ has a $Monge's$ $form$ if for all $(i,j), \gamma_{ij}$ is included in a function's graph say $T_{ij}$. The main problem here is given $(T_{ij})$, it is not possible to build the associated $\gamma_{ij}$. Indeed, the knowledge of $(T_{ij})$ does not include which parts of $\mu_i$ is transported into $\nu_j$ or in other words we have still to fix $(f_{ij})$ and $(g_{ij})$. This remark makes hard to use only entropic relaxations (see \citep{nenna2016numerical}) to solve our problem  since the main data to find optimal transference plan is to find these $(f_{ij})$ and $(g_{ij})$.

\renewcommand{\abstractname}{Acknowledgements}
\begin{abstract}
The author would like to thank B. Nazaret for his advices and his re-reading of the paper.
\end{abstract}

\bibliographystyle{unsrt}
\bibliography{Bibliomemoire}

\end{document}